\newtheorem{Def}{Definition}
\newtheorem{The}[Def]{Theorem}
\newtheorem{Rem}[Def]{Remark}
\newtheorem{Pro}[Def]{Proposition}
\newtheorem{Ex}[Def]{Example}
\newtheorem{Cex}[Def]{Counterexample}
\begin{document}

\title{A new stochastic dominance criterion for dependent random variables with applications}

\author{F\'elix Belzunce \\
    Department of Estad\'{\i}stica e Investigaci\'on Operativa, University of Murcia \\ Facultad de Matemáticas, Campus de Espinardo \\ 30100 Espinardo (Murcia) \\ SPAIN \\ e-mail: \texttt{belzunce@um.es} 
    \\ and \\
    Carolina Mart\'{i}nez-Riquelme \\
    Department of Estad\'{\i}stica e Investigaci\'on Operativa, University of Murcia \\ Facultad de Matemáticas, Campus de Espinardo \\ 30100 Espinardo (Murcia) \\ SPAIN \\ e-mail: \texttt{carolina.martinez7@um.es @um.es} }

\date{}

\maketitle

\begin{abstract} In this paper we develop a new tool for the comparison of paired data based on a new criterion of stochastic dominance that takes into account the dependence structure of the random variables under comparison. This new procedure provides a more detailed comparison of dependent random variables and overcomes some difficulties of standard techniques like Student's t and Wilcoxon-Mann-Whitney tests for non normal data. This tool provides an alternative to the usual stochastic dominance criterion which only considers the marginal distributions in the comparison.  We show how this new tool can be fruitfully used for the comparison of paired asset returns. 

\noindent \textbf{JEL Code}: C12, G11

\noindent \textbf{Keywords and phrases}: Stochastic dominance,  dependent random variables, nonparametric tests, portfolio selection, regret theory.

\end{abstract}

\section{Introduction}

The comparison of random quantities according to their magnitude has a long history. The most simple and basic way is the comparison of measures of location like the mean or median, whereas the use of stochastic dominance criteria have been proved to provide more detailed comparisons, to such extent that it is a fundamental tool for the comparison of random quantities in economics, finance and risk analysis nowadays.

The usual stochastic dominance criterion was introduced by Lehmann (1955) and has been extensively applied in economics, finance and risk analysis as it can be seen in the books by Müller and Stoyan (2002), Denuit et al. (2005), Sriboonchitta et al. (2010) and Levy (2016). Some other applications can be found in Shaked and Shanthikumar (2007) and Belzunce et al. (2016a). This criterion is commonly used to rank distributions as an alternative to the comparisons provided by measures of location.  Given two random variables $X$ and $Y$, $X$ is said to be less or equal than $Y$ in the stochastic dominance criterion (or stochastic order), denoted by $X\le_{\textup{st}}Y$, if and only if, $P(X>x)\le P(Y>x)$, for all $x\in \mathbb R$.  Clearly, this criterion compares the magnitude of the two random variables, where the random variable $Y$ tends to take larger values than the random variable $X$ from a probabilistic point of view. In particular, if the stochastic dominance holds, then the means, the medians and every percentile or quantile of the corresponding random variables are also ordered. The number of statistical inference techniques about the stochastic dominance criterion have increased during the last two decades; see, for example, Davison and Duclos (2000), Barret and Donald (2003), Linton et al.  (2005) and more recently Scaillet and  Topaloglou (2010), Barret et al.  (2014) and Andreoli (2018). However, the main drawback of the stochastic dominance criterion is that random variables and does not take into account the dependence structure when the random variables are dependent, which is the case of returns for two assets.  Despite the fact that several criteria of stochastic dominance for dependent random variables have been considered (see for example Shanthikumar and Yao,  1991 and Cai and Wei,  2014), they are very technical in nature and are not easy to check from a statistical point of view. 

Therefore, the aim of this paper is to propose a new tool for the comparison of dependent random variables, which is easy to deal with,  according to a new stochastic dominance criterion that takes into account the dependence structure and to show how this tool can be used in practice to improve the outcome of a portfolio by selecting the assets according to the new criterion,  covering situations where the assets are not ordered in the usual stochastic dominance criterion and/or the dependence is not taken into account. 

In order to provide a motivation of the new tool,  we start making a review of the usual statistical techniques for the comparison of dependent random variables,  which are based on a paired sample from a bivariate random vector $(X,Y)$.  If the difference $Y-X$ is normally distributed,  the standard procedure is to apply the Student's t test for $E[Y-X]$, concluding that $Y$ tends to take larger values than $X$ if $E[Y-X]>0$. As we will see later, this property implies, in some probabilistic sense, that the random variable with the greatest mean truly tends to take larger values than the other one (see Example \ref{belip}) in contrast with the independent case. The problem arises when $Y-X$ is not normally distributed. In such case, the Wilcoxon-Mann-Whitney (WMW) test is commonly performed for the median of $Y-X$ in order to check if the median is greater than 0. However, the median approach has some problems and disadvantages that we consider next. From the statistical point of view, for instance, Divine et al. (2018) have demonstrated that the WMW procedure fails as a test to compare two medians. From the probabilistic point of view, let us pose the following question: Is it appropriate to consider the median of $Y-X$ as a measure to check whether a random variable tends to take larger values than another one, when $X$ and $Y$ are dependent? Next, we show that the answer is no, as well as we explore which criterion should be considered to compare the magnitude of two dependent random variables. 

Let us first show the reason why the comparison based on the median of $Y-X$ is not appropriate to compare dependent random variables. The idea behind the WMW test is to check if the median of $Y-X$ is greater than 0 (or, equivalently, if the median of $X-Y$ is smaller than 0) as an evidence that $Y$ tends to take larger values than $X$. If we assume that $P(X=Y)=0$, then the median of $Y-X$ is greater than  0 if, and only if, $P(Y-X >0) \ge (0.5 \ge) P(X-Y > 0)$, this property is equivalent to the strict stochastic precedence criterion and it is denoted by $X\le_{\textup{pr}}Y$ (see Boland et al., 2004 and a related definition by Arcones et al., 2002).  However, the following example shows that the stochastic precedence criterion is not very informative to compare the magnitude of two dependent random variables. 

\begin{Ex}\label{Ex1}
Let us consider the following example in which we have a discrete bivariate distribution on $\mathbb R^2$, such that all the points have the same probability  (see Figure \ref{fig1}). In this case, it is clear that $P(X>Y)=0.5=P(Y>X)$ and, therefore, the two random variables are equal according to the stochastic precedence criterion. However, if we look at the differences $Y-X$, when $Y$ is greater than $X$ (dashed lines), these are greater than the differences between $X-Y$, when $X$ is greater than $Y$ (dotted lines) and, intuitively, $Y$ tends to take larger values than $X$.
\begin{figure}
\centering
\includegraphics[scale=0.7]{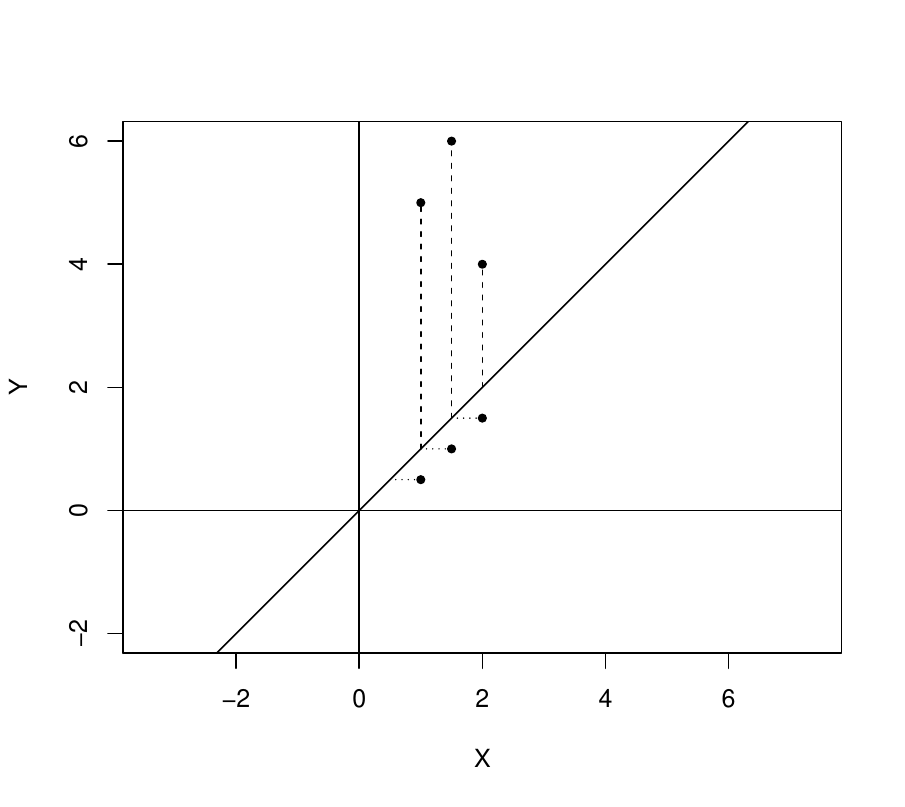}
\caption{\label{fig1}\textit{A discrete distribution on $\mathbb R^2$ where each point has the same probability equal to 1/6.}}
\end{figure} 
\end{Ex}

The problem lies in reducing all the information about $Y-X$ to just one number, that is, to $P(Y-X>0)$,  without taking into account the thickness of the left tail (below the median) and the right tail (above the median) of $Y-X$. Moreover, we observe that the stochastic dominance criterion is not appropriate in this case either, since it only compares the marginal distributions of the two random variables without taking into account their dependence structure and, therefore, it should not be considered as a criterion to compare two dependent random variables. Because of these facts, we wonder about which probabilistic criterion should be considered to compare the magnitude of two random variables in the dependent case in order to affirm that one of the random variables tends to take larger values than the other one. 

Going back to Example \ref{Ex1}, the stochastic precedence criterion does not take into account the magnitudes of $Y-X$ and $X-Y$, therefore we can consider the comparison of the magnitude of $Y-X$ and $X-Y$ to get a more detailed comparison of $X$ and $Y$. A natural way to compare such differences is to consider the usual stochastic dominance criterion, that is, to check if $X-Y\le_{st} Y-X$. Since the stochastic dominance criterion compares the magnitude of two random variables, if the previous inequality holds, then $Y-X$ tends to take large values than $X-Y$, and therefore, $Y$ tends to take larger values than $X$ taking into account their joint distribution to reach the conclusion. Based on this idea,  we  introduce a new statistical test for the comparison of paired data.  A natural scenario where the new criterion can be applied is the comparison of asset returns. The use of stochastic dominance criteria to compare two assets with returns $X$ and $Y$ has been shown to be a very powerful tool to improve and compare portfolios.  As we will see, the new tool can be used to improve the outcomes of a portfolio by replacing one asset by another one which stochastically dominates the previous one according to the new criterion. 

The organization of this paper is as follows. In Section 2, we introduce a new stochastic dominance criterion for the comparison of the magnitude of two dependent random variables, as well as we study some of its main properties. Along the revision of this paper, professor Tomasso Lando noticed to us that this criterion appears in Montes et al. (2020) under a different notation. Fortunately, there is no overlap with the contents of this paper and the main focus in the paper by Montes et al. (2020) is to show the connection among the usual stochastic order and the new one under different types of dependence. Additionally, they present the new criterion in the context of regret theory which can be used to provide an interpretation in the context of risk theory, as we will observe later in Section 2.  Next, in Section 3 we derive a statistical procedure for testing the new criterion. In Section 4, we use the test to provide a fruitful application in the context of finance. Finally, in Section 5 we give some comments about open problems and future research about the new  stochastic dominance criterion. 

\section{Stochastic dominance among two dependent random variables}

According to the previous section, we introduce the following new stochastic dominance criterion as a tool for comparing two dependent random variables in terms of their magnitude.

\begin{Def}\label{wjhr} Given a bivariate random vector $(X,Y)$, we say that $X$ is smaller than $Y$ in the weak joint stochastic dominance, denoted by $X \le_{\textup{st:wj}} Y$, if
\[
P(X-Y>t)\le P(Y-X>t),\text{ for all }t\in \mathbb R,
\]
or, equivalently, if $X-Y \le_{\textup{st}}Y-X$.
\end{Def}

According to the preceding notation in the literature, we have used the term ``joint''  to indicate that we are considering the dependence structure in the comparison and the term ``weak'' to point out that it is a weaker criterion than an existing criterion, as we will see later. Prior to explore the properties of this order and the relationships with some other criteria found in the literature, let us make some remarks. 

This notion has been independently introduced by Montes et al. (2020), where they give an interpretation in the context of regret theory. This theory was introduced by Bell (1982), Fishburn (1982) and Loomes and Sugden (1982), where the satisfaction of a decision maker is based on the benefits of his/her choice and on the benefits of the unselected one. In particular, Montes et al. (2020) show that $X \le_{\textup{st:wj}} Y$ if,  and only if, $X \le_{\textup{AR}} Y$ (absolute regret order), where $X \le_{\textup{AR}} Y$ if, and only if, $E[u(X-Y)]\le E[u(Y-X)]$, for very increasing function $u$ (see Hon Tan and Hartman, 2013). Replacing ``benefits" by ``losses", the st:wj can be interpreted in the context of risk theory as follows. A decision maker would prefer risk $X$ instead of risk $Y$, taking into account the losses of the selected risk and the unselected one, if $X \le_{\textup{st:wj}} Y$.

\begin{Rem}\label{2.4} As observed by Montes et al (2020) the new criterion is coherent with both the comparison of the means of the two random variables and the comparison in the stochastic precedence criterion. It is easy to see that if $X\le_{\textup{st:wj}}Y$, then $E[X]\le E[Y]$. In addition, from the definition of the new criterion, it also follows that $P(X>Y)\le P(Y>X)$, that is, $X\le_{\textup{pr}}Y$.

Finally, notice that if $X\le_{\textup{st:wj}}Y$ then $E[X]=E[Y]$ if, and only if, $X-Y=_{\textup{st}}Y-X$ (see Theorem 1.A.8 in Shaked and Shanthikumar, 2007).
\end{Rem}

\begin{Rem}\label{2.1}  We observe that the new criterion compares the probability of the areas $A_t=\{(x,y)\in \mathbb R^2|y > x + t\}$ and $B_t=\{(x,y)\in \mathbb R^2| y  < x -t\}$, for all $t\in \mathbb R$, that is, the new criterion is equivalent to $P((X,Y)\in B_t) \le P((X,Y)\in A_t)$, for all $t \in \mathbb R$. It is easy to see that the previous inequality holds for the random variables $X$ and $Y$ defined in Example \ref{Ex1}, that is, $X \le_{\textup{st:wj}} Y$ in that case.
\end{Rem}

\begin{Rem}\label{2.2} If $X-Y$ (or equivalently $Y-X$) has a continuous distribution, then the weak joint stochastic dominance holds if, and only if, $P(X-Y>t)\le P(Y-X>t),\text{ for all }t\ge 0$. The proof follows trivially from the fact that $P(X-Y>t)=P(Y-X\le -t)$ and $P(Y-X>t)=P(X-Y\le -t)$, for all $t<0$. In other words, the right tail of $X-Y$ behaves like the left tail of $Y-X$ and, therefore, it is natural to study if, under the condition $X \le_{\textup{st:wj}} Y$,  the right tail of $Y-X$ is lighter than the left tail of  $Y-X$. The usual stochastic order has been related to the comparison of tails by Mulero et al. (2017). They prove that for two non negative random variables $X$ and $Y$, if $X\le_{\textup{st}}Y$, then $X\le_{\textup{rtail}}Y$, where $X\le_{\textup{rtail}}Y$ if 
\[
E[Xw(X)]\le E[Yw(Y)],
\]
for all non-decreasing function $w:\mathbb R \Rightarrow \mathbb R^+$ provided previous expectations exist. The rtail ordering has been proved to be consistent with the fact that the right tail of $X$ is heavier than the right tail of $Y$. Now, given that under the assumption $X \le_{\textup{st:wj}} Y$ we have that $(X-Y)^+ \le_{\textup{st}}(Y-X)^+$, then $(X-Y)^+ \le_{\textup{rtail}}(Y-X)^+$ and therefore the right tail of $Y-X$ is lighter than the right tail of $X-Y$, or equivalently, the left tail of $Y-X$.
\end{Rem}

Let us see now an example of a family of parametric distributions where the new  weak joint stochastic dominance holds.

\begin{Ex}\label{belip} Let us consider a bivariate elliptically distributed random vector, $(X,Y)\sim E_2(\mathbf{\mu}, \mathbf{\Sigma}, \Psi)$, that is, $(X,Y)^t=\mathbf{\mu} +\mathbf A \mathbf U$, where $\mathbf{\mu}$ is the vector of means,  $\mathbf A\in \mathbb R ^{2\times k}$ is a matrix of parameters such that $\mathbf{\Sigma} = \mathbf{AA}^t$,  and $\mathbf U$ is a $k$-dimensional spherical distribution with characteristic generator $\Psi$. From the well known properties of linear combinations of elliptical distributions, it is easy to see that $Y-X\sim E_1(\mu_2-\mu_1, s_{11} + s_{22} -2 s_{12}, \Psi)$ and $X-Y\sim E_1(\mu_1-\mu_2, s_{11} + s_{22} -2 s_{12}, \Psi)$, where 
\[
\mathbf{\Sigma}=\left ( 
\begin{matrix}
    s_{11} & s_{12} \\
    s_{12} & s_{22} 
  \end{matrix}
  \right ).
\]
That is, $Y-X$ and $X-Y$ are equally distributed, except for a location change. Therefore, if $E[X]=\mu_1\le E[Y]=\mu_2$, then $X-Y\le_{\textup{st}}Y-X$ or, equivalently, $X\le_{\textup{st:wj}}Y$.

Therefore, the comparison in the new criterion only depends on the position on the plane of the mean vector and does not depend on the variances and correlation among the two random variables. 

As a particular case, if we consider a bivariate normal random vector $(X,Y)\sim N(\mathbf \mu,\mathbf V)$, where $\mathbf \mu$ is the mean vector and $\mathbf V$ is the covariance matrix, if we assume that $E[X]=\mu_1 \le \mu_2=E[Y]$, then we have that $X\le_{\textup{st:wj}}Y$.  It is worthy mentioning that the previous comment is still true when the difference $X-Y$ is normally distributed, without any assumption on the joint distribution. As a consequence, the Student's t test for paired samples can be also considered as a test for the comparison of the magnitude of two dependent random variables according to the new criterion. 

Figure \ref{fig3} shows examples for both positive and negative correlation. In particular, we consider $(X,Y)\sim N(\mu,V)$, where $\mu=(1,2)$, the variances are 2 and 1, respectively, and the correlation is 0.8, for the positive correlation and  -0.8, for the negative correlation. The dashed lines are the boundaries of the areas $A_t$ and $B_t$, for $t=1.2$, whose probabilities can be compared to check the weak joint stochastic order (see Remark \ref{2.1}).
\end{Ex}

\begin{figure}[ht]\centering
\includegraphics[width=\textwidth]{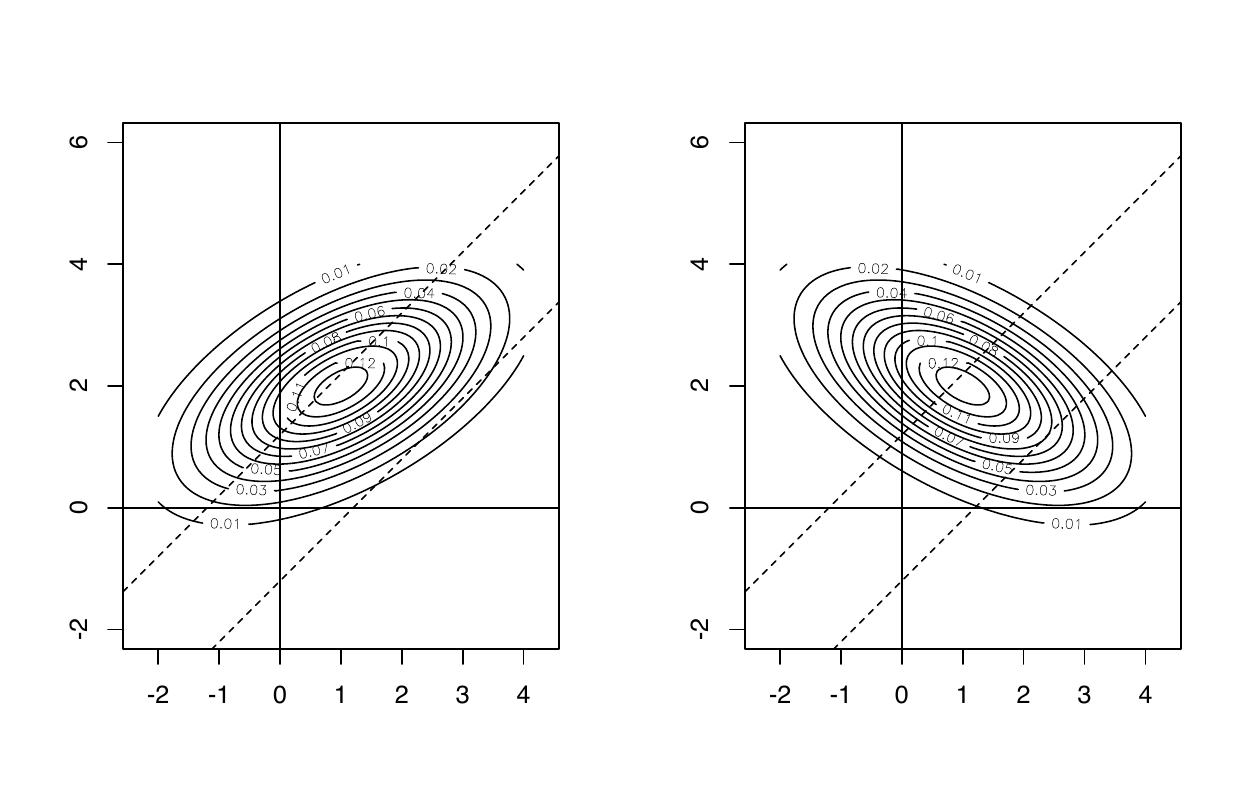}
\caption{\label{fig3}\textit{Joint density functions for $(X,Y)\sim N(\mu,V)$, where $\mu=(1,2)$, the variances are 2 and 1, the correlation is 0.8 (on the left) and  -0.8 (on the right), where $t=1.2$.}}
\end{figure}

Next, we explore the relationships between the new criterion and some other joint stochastic dominance criteria previously defined in the literature. As mentioned in the introduction, the comparison of two dependent random variables according to their magnitude was considered first by  Shanthikumar and Yao (1991) and more recently by Cai and Wei (2014) and Belzunce et al. (2016b), under two different approaches. Let us describe the two approaches.

The approach given by Shanthikumar and Yao (1991) is based on a preliminary characterization of the corresponding stochastic dominance criterion in the independent case,  considering the same characterization as a definition in the dependent case. For example, Shanthikumar and Yao (1991) proved the following characterization of the usual stochastic dominance criterion. 

Let $X$ and $Y$ be independent random variables, then $X\le_{\textup{st}}Y$ if, and only if, $E[g(X,Y)]$ $\le E[g(Y,X)]$, for all $g \in G_{st}$, provided that the previous expectations exist, where 
\[
G_{st}=\{g:\mathbb R^2 \rightarrow \mathbb R | g(x,y)-g(y,x)\text{ is increasing in }x,\text{ for all }y\}.
\]

According to this characterization, they provided the following definition. Given a bivariate random vector $(X,Y)$, $X$ is said to be smaller or equal than $Y$ in the joint stochastic order, denoted by $X\le_{\textup{st:j}}Y$, if $E[g(X,Y)]\le E[g(Y,X)],\text{ for all }g \in G_{st}$, provided that the previous expectations exist. Following this idea they proposed some other joint stochastic dominance criteria. The motivation of these criteria is purely mathematical based, and it is hard to find a motivation from the applied point of view.  Additionally,  there are no statistical tools to check these criteria. According to these comments, it is clear that a new criterion is needed and our proposal provides a simple answer to this problem.

Next, we show that the new criterion is weaker than the joint stochastic order. 

\begin{The} Given a bivariate random vector $(X,Y)$, if $X\le_{\textup{st:j}}Y$, then $X\le_{\textup{st:wj}}Y$.
\end{The}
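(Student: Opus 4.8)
The plan is to exhibit, for each threshold $t\in\mathbb R$, a single function $g_t\in G_{st}$ for which the defining inequality of the joint stochastic order collapses exactly into the tail inequality defining $\le_{\textup{st:wj}}$. By the equivalent formulation in Definition \ref{wjhr}, it suffices to prove that $P(X-Y>t)\le P(Y-X>t)$ for every $t\in\mathbb R$, so I would fix $t$ and look for a $g_t$ satisfying $E[g_t(X,Y)]=P(X-Y>t)$ and $E[g_t(Y,X)]=P(Y-X>t)$.

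First I would take the indicator $g_t(x,y)=\mathbf 1(x-y>t)$. This choice immediately gives $E[g_t(X,Y)]=P(X-Y>t)$ and $E[g_t(Y,X)]=P(Y-X>t)$, and since $g_t$ is bounded both expectations exist. The only point requiring verification is membership in $G_{st}$, i.e.\ that $g_t(x,y)-g_t(y,x)$ is increasing in $x$ for each fixed $y$. Computing,
\[
g_t(x,y)-g_t(y,x)=\mathbf 1(x>y+t)-\mathbf 1(x<y-t),
\]
which is the sum of the nondecreasing step $\mathbf 1(x>y+t)$ and the nondecreasing step $-\mathbf 1(x<y-t)$; hence it is nondecreasing in $x$, taking the values $-1,0,1$ as $x$ crosses the two thresholds $y-t$ and $y+t$, regardless of the sign of $t$. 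Therefore $g_t\in G_{st}$.

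Finally, applying the hypothesis $X\le_{\textup{st:j}}Y$ to $g_t$ yields $E[g_t(X,Y)]\le E[g_t(Y,X)]$, that is, $P(X-Y>t)\le P(Y-X>t)$. As $t\in\mathbb R$ was arbitrary, the weak joint stochastic dominance follows. The argument is essentially immediate once the correct test function is identified; the only genuine step, and the place where a sign slip is easy to make, is checking the monotonicity condition defining $G_{st}$, in particular keeping track that it is $-\mathbf 1(x<y-t)$ rather than $+\mathbf 1(x<y-t)$ that furnishes the nondecreasing piece and that the conclusion is insensitive to whether $t$ is positive or negative.
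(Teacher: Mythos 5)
Your proof is correct, but it takes a different route from the paper's. The paper argues at one remove: it cites the result of Shanthikumar and Yao (1991) that $X\le_{\textup{st:j}}Y$ implies $(X,-Y)\le_{\textup{st}}(Y,-X)$ in the multivariate stochastic dominance, and then applies the increasing functional $(u,v)\mapsto u+v$ to obtain $X-Y\le_{\textup{st}}Y-X$. You instead work directly from the definition of the class $G_{st}$, exhibiting for each $t$ the explicit test function $g_t(x,y)=\mathbf 1(x-y>t)$ and verifying by hand that
\[
g_t(x,y)-g_t(y,x)=\mathbf 1(x>y+t)-\mathbf 1(x<y-t)
\]
is nondecreasing in $x$ for every fixed $y$ (your verification is sound: each summand is nondecreasing, and the conclusion holds whatever the sign of $t$, i.e.\ whichever of the thresholds $y-t$, $y+t$ comes first). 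In effect, your $g_t$ is the specialization of the cited Shanthikumar--Yao mechanism to the single increasing function $\phi(u,v)=\mathbf 1(u+v>t)$ composed with $(x,y)\mapsto(x,-y)$, so you have inlined and made elementary exactly the step the paper delegates to a reference. What each approach buys: the paper's proof is shorter and places the theorem inside a known framework (and the intermediate multivariate ordering $(X,-Y)\le_{\textup{st}}(Y,-X)$ is a strictly stronger fact, useful elsewhere, e.g.\ in the multivariate extension sketched in Section 5); yours is self-contained, requires no external result, and makes transparent precisely which members of $G_{st}$ force the $\le_{\textup{st:wj}}$ conclusion, which also shows that the full strength of $\le_{\textup{st:j}}$ is far from needed --- only the countable family $\{g_t\}_{t\in\mathbb R}$ is used.
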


\begin{proof} According to Shanhtikumar and Yao (1991), if $X\le_{\textup{st:j}}Y$, then $(X,-Y)\leq_{\textup{st}} (Y,-X)$ which, from the definition of the multivariate stochastic dominance (see Section 5), implies that $X-Y\leq_{\textup{st}} Y-X$, or equivalently, $X \le_{\textup{st:wj}} Y$.\hfill \end{proof}

Therefore, taking into account the previous results, comments and the results in Shanthikumar and Yao (1991),  we have the following chain of implications. 

\begin{center}
\begin{tabular}{c c c c c }
&  & $X\le_{\textup{st:j}}Y$ & $\Rightarrow$ & $X\le_{\textup{st}}Y$ \\
&  & $\Downarrow$ & & $\Downarrow$ \\
& $X\le_{\textup{pr}}Y$ $\Leftarrow$ & $X\le_{\textup{st:wj}}Y$ & $\Rightarrow$ & $E[X] \le E[Y]$.\\
\end{tabular}
\end{center}

Therefore, the new criterion is weaker than the previous criterion, which means that it has a wider applicability and it can only be improved by the comparison of medians and means, which is not very informative as we have already pointed out. 

Next, we provide some counterexamples to show that the implications related to the new criterion do not hold in the reverse sense. On the one hand, Example \ref{Ex1} can be used as a counterexample for the implication $X\le_{\textup{st:wj}}Y \Rightarrow X\le_{\textup{st}}Y$. On the other hand, the following counterexample shows that the implication $X\le_{\textup{st:wj}}Y \Rightarrow X\le_{\textup{st:j}}Y$ does not hold.

\begin{Cex}\label{Cx1} Let us consider $(X,Y)\sim N(\mathbf \mu, \mathbf V)$, where $E[X]=\mu_1 \neq \mu_2=E[Y]$ and $Var[X]=\mathbf V_{11} \neq \mathbf V_{22}=Var[Y]$. It is known that  $X\nleq_{\textup{st}}Y$ and $X\ngeq_{\textup{st}}Y$ (see Table 2.2 in Belzunce et al., 2016a). However, if $E[X]=\mu_1 < \mu_2=E[Y]$ we have that $X\le_{\textup{st:wj}}Y$. Since $X\le_{\textup{st:j}}Y \Rightarrow X\le_{\textup{st}}Y$, this example contradicts the implication $X\le_{\textup{st:wj}}Y \Rightarrow X\le_{\textup{st:j}}Y$.
\end{Cex} 

The main focus of the paper by Montes et al. (2020) is the relationship among the new order and the usual stochastic order. For example they prove that if  $X$ and $Y$ are independent random variables such that $X\le_{\textup{st}}Y$, then $X\le_{\textup{st:wj}}Y$. A simple proof of this fact is the following. If $X\le_{\textup{st}}Y$ then follows that $-Y\le_{\textup{st}}-X$. Now, by the preservation under convolution of the stochastic dominance, it holds that $X-Y\le_{\textup{st}}Y-X$ or, equivalently, $X\le_{\textup{st:wj}}Y$. Additionally, Montes et al. (2020) provide a counterexample  for the reversed implication.

Montes et al. (2020) also provide two results where the usual stochastic order implies the st:wj order under the assumption of comonotonicity or Archimedean copula of the random vector $(X,Y)$.

Next we provide a more general result for the case of Archimedean copulas. In particular, we provide a result for a general copula.

\begin{The}\label{Th-cop} Let $(X,Y)$ be a bivariate random vector with absolutely continuous distribution and  copula $C(p,q)$. If $X\le_{\textup{st}} Y$ and  
\begin{equation}\label{cond-cop}
\frac{\partial}{\partial p}C(u, v_1) \le \frac{\partial}{\partial q}C(v_2,u),\text{ for all }u,v_1 \text{ and } v_2\in(0,1),\text{ such that } v_1\le v_2.
\end{equation}
then $X\le_{\textup{st:wj}} Y$.
\end{The}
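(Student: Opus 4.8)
The plan is to turn the desired relation $X-Y\le_{\textup{st}}Y-X$ into a pointwise inequality between two integrands over $(0,1)$, obtained by expressing each tail probability through the conditional distributions encoded by the copula. Let $F$ and $G$ denote the (continuous) marginal distribution functions of $X$ and $Y$. Recall that for an absolutely continuous vector with copula $C$ one has $P(Y\le y\mid X=x)=\frac{\partial}{\partial p}C(F(x),G(y))$ and $P(X\le x\mid Y=y)=\frac{\partial}{\partial q}C(F(x),G(y))$. Fixing $t\in\mathbb R$ and conditioning $P(Y-X>t)$ on $X=x$ and $P(X-Y>t)$ on $Y=y$, followed by the substitutions $u=F(x)$ and $u=G(y)$ respectively, I would obtain
\[
P(Y-X>t)=\int_0^1\Bigl[1-\frac{\partial}{\partial p}C\bigl(u,G(F^{-1}(u)+t)\bigr)\Bigr]\,du
\]
and
\[
P(X-Y>t)=\int_0^1\Bigl[1-\frac{\partial}{\partial q}C\bigl(F(G^{-1}(u)+t),u\bigr)\Bigr]\,du .
\]

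Subtracting, the claim $P(X-Y>t)\le P(Y-X>t)$ reduces to showing that the integrand
\[
\frac{\partial}{\partial q}C\bigl(F(G^{-1}(u)+t),u\bigr)-\frac{\partial}{\partial p}C\bigl(u,G(F^{-1}(u)+t)\bigr)
\]
is nonnegative for almost every $u\in(0,1)$. Writing $v_1=G(F^{-1}(u)+t)$ and $v_2=F(G^{-1}(u)+t)$, this is exactly the hypothesis \eqref{cond-cop} applied at the triple $(u,v_1,v_2)$, provided one can guarantee $v_1\le v_2$. Thus the whole argument hinges on verifying this last inequality.

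To establish $v_1\le v_2$ I would use $X\le_{\textup{st}}Y$, which is equivalent to $G(z)\le F(z)$ for all $z$ and hence to $F^{-1}(u)\le G^{-1}(u)$ for all $u\in(0,1)$. Consequently $F^{-1}(u)+t\le G^{-1}(u)+t$, and chaining the monotonicity of $G$ with the pointwise domination $G\le F$ gives
\[
G\bigl(F^{-1}(u)+t\bigr)\le G\bigl(G^{-1}(u)+t\bigr)\le F\bigl(G^{-1}(u)+t\bigr),
\]
that is $v_1\le v_2$. Hypothesis \eqref{cond-cop} then makes the integrand nonnegative, so $\Delta(t):=P(Y-X>t)-P(X-Y>t)\ge 0$ for every $t$, which is precisely $X\le_{\textup{st:wj}}Y$.

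I expect the main obstacle to be the clean derivation of the two integral representations: one must justify the conditional-distribution identities for the partial derivatives of $C$, handle the changes of variables $u=F(x)$ and $u=G(y)$ under only absolute continuity (so that $F$ and $G$ are continuous but possibly not strictly increasing), and check that the boundary values $v_1,v_2\in\{0,1\}$ cause no trouble. Once the representation is in place, the monotonicity step bridging $X\le_{\textup{st}}Y$ to $v_1\le v_2$ and the direct invocation of \eqref{cond-cop} are routine.
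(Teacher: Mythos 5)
Your proposal is correct and follows essentially the same route as the paper: the same integral representations of the two tail probabilities over $(0,1)$ after the quantile changes of variable, the same chain $G(F^{-1}(u)+t)\le G(G^{-1}(u)+t)\le F(G^{-1}(u)+t)$ extracted from $X\le_{\textup{st}}Y$, and the same pointwise application of \eqref{cond-cop} with $v_1=G(F^{-1}(u)+t)$ and $v_2=F(G^{-1}(u)+t)$. The only cosmetic differences are that you derive the representations directly by conditioning on $X=x$ (resp.\ $Y=y$) where the paper cites Theorem 2.3 of Navarro and Sarabia (2022) together with the copula $C_{-}(p,q)=p-C(p,1-q)$ of $(X,-Y)$, and that you compare survival functions at $t$ rather than distribution functions at $-t$.
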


\begin{proof} From Theorem 2.3 by Navarro and Sarabia (2022) we have that the distribution function of $X-Y$ is given by
\[
P(X-Y\le t) = \int_{- \infty}^{+ \infty}\frac{\partial}{\partial p}C_{-}(F(x), \overline G(x-t))f(x)dx,
\]
where $C_{-}(p,q)=p-C(p, 1-q)$ is the copula of $(X,-Y)$ (see Theorem 2.4.4, Nelsen, 2006).

Let us consider in the previous integral the change of variable $F(x)=u$,  then we have 
\begin{equation}\label{par1}
P(X-Y\le t) = \int_0^1\left(1-\frac{\partial}{\partial p}C(u, G(F^{-1}(u)-t))\right)du.
\end{equation}

In a similar way we have that the distribution function of $Y-X$ is given by:
\begin{equation}\label{par2}
P(Y-X\le t) = \int_0^1\left(1-\frac{\partial}{\partial q}C(F(G^{-1}(u)-t),u)\right)du.
\end{equation}

Now given that $X\le_{\textup{st}}Y$ we have that $F(x)\ge G(x)$ for all $x \in \mathbb R$ and $F^{-1}(u) \le G^{-1}(u)$ for all $u \in (0,1)$, therefore
\[
F(G^{-1}(u)-t)\ge G(G^{-1}(u)-t) \ge G(F^{-1}(u)-t),\text{ for all }t\in\mathbb R,u\in(0,1).
\]

Now, from \eqref{cond-cop} we have that 
\[
\frac{\partial}{\partial p}C(u, G(F^{-1}(u)-t)) \le \frac{\partial}{\partial q}C(F(G^{-1}(u)-t),u),\text{ for all }t\in\mathbb R,u\in(0,1).
\]

Combining this inequality with equations \eqref{par1} and \eqref{par2} we get that $P(X-Y\le t) \ge P(Y-X\le t)$ for all $t\in \mathbb R$, that is, $X\le_{\textup{st:wj}}Y$.   \end{proof}

 If $C$ is an Archimedean copula, that is, $C(u,v)=\phi^{-1}(\phi(u) + \phi(v))$, where $\phi$ is differentiable an strictly decreasing and convex function, condition \eqref{cond-cop} is equivalent to 
 \[
 \phi'(\phi^{-1}(\phi(u)+\phi(v_1)))\le  \phi'(\phi^{-1}(\phi(u)+\phi(v_2))),u,v_1\le v_2
 \]
 
 Notice that previous condition holds from the decreasingness and convexity of function $\phi$.  Therefore, Theorem \ref{Th-cop} provides a generalization of Proposition 14 by Montes et al. (2020), where the generator is assumed to be a twice differentiable function.

To finish, we provide some desirable properties of a stochastic ordering such as stability under convolution, mixtures, compounding and limit (see Denuit et al. 2005, p. 106). First, we observe the preservation under convolution. 

\begin{The} Let $(X_1,Y_1),(X_2,Y_2),\ldots,(X_n,Y_n)$ be a set of independent bivariate random vectors. If $X_i\le_{\textup{st:wj}}Y_i$, for all $i=1,2,\ldots,n$, then 
\[
\sum_{i=1}^n X_i \le_{\textup{st:wj}}  \sum_{i=1}^n Y_i.
\]
\end{The}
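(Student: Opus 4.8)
The plan is to reduce the statement directly to the preservation of the usual stochastic order $\le_{\textup{st}}$ under convolution, a property that has already been used earlier in this section (in the simple proof of the independent case of Montes et al.). First I would introduce, for each $i$, the difference $D_i = X_i - Y_i$. With this notation, the definition of the weak joint stochastic dominance (Definition \ref{wjhr}) tells us that the hypothesis $X_i \le_{\textup{st:wj}} Y_i$ is exactly the statement $D_i \le_{\textup{st}} -D_i$.

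The next step is to record the independence structure. Since each $D_i$ is a measurable function of $(X_i,Y_i)$ and the bivariate vectors $(X_1,Y_1),\ldots,(X_n,Y_n)$ are independent, the family $\{D_i\}_{i=1}^n$ consists of independent random variables; consequently so does $\{-D_i\}_{i=1}^n$, each $-D_i$ being a function of $D_i$. This is the only place where the independence assumption on the vectors enters, and it is precisely what licenses the convolution argument.

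With these two observations in hand, I would apply the stability of $\le_{\textup{st}}$ under convolution of independent random variables (see, e.g., Theorem 1.A.3 in Shaked and Shanthikumar, 2007) to the two independent families: from $D_i \le_{\textup{st}} -D_i$ for every $i$ we obtain
\[
\sum_{i=1}^n D_i \le_{\textup{st}} \sum_{i=1}^n (-D_i) = -\sum_{i=1}^n D_i.
\]
Rewriting both sides in terms of the original variables gives $\sum_{i=1}^n X_i - \sum_{i=1}^n Y_i \le_{\textup{st}} \sum_{i=1}^n Y_i - \sum_{i=1}^n X_i$, which is exactly $\sum_{i=1}^n X_i \le_{\textup{st:wj}} \sum_{i=1}^n Y_i$ by Definition \ref{wjhr}.

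I do not expect a genuine obstacle, as the conclusion is essentially a one-line consequence of the convolution stability of $\le_{\textup{st}}$ once the problem is recast in terms of the $D_i$. The one point worth a moment's care is the independence bookkeeping: the convolution theorem requires only that each of the two summand families be \emph{internally} independent, not that $D_i$ be independent of $-D_i$ (which is of course false); verifying this internal independence is immediate from the independence of the vectors $(X_i,Y_i)$.
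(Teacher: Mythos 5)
Your proposal is correct and follows essentially the same route as the paper's own proof: both reduce the claim to $X_i-Y_i \le_{\textup{st}} Y_i-X_i$, note that each family of differences is internally independent because the vectors $(X_i,Y_i)$ are independent, and then invoke the preservation of $\le_{\textup{st}}$ under convolution. Your explicit remark that the convolution theorem only needs internal independence of each summand family (and not independence of $D_i$ from $-D_i$) is a careful point the paper leaves implicit, but it does not change the argument.
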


\begin{proof} From the assumptions, it is clear that $X_i-Y_i \le_{\textup{st}}Y_i-X_i$, for all $i=1,\ldots,n$ and also that the random variables $X_1-Y_1,X_2-Y_2,\ldots,X_n-Y_n$
are independent, as well as the random variables $Y_1-X_1,Y_2-X_2,\ldots,Y_n-X_n$. Therefore, from the preservation of the stochastic dominance under convolutions, we have that 
\[
\sum_{i=1}^n (X_i - Y_i) \le_{\textup{st}}  \sum_{i=1}^n (Y_i - X_i),
\]
that is, the st:wj order holds among the convolutions of $X$'s and $Y$'s. \hfill \end{proof}

Now, we provide a result on mixtures. 

\begin{The}\label{theta} Let $\{(X(\theta),Y(\theta)), \theta \in S\subseteq \mathbb R \}$ be a family of bivariate random vectors. Let $\Theta_1$ and  $\Theta_2$ be two random variables with common support $S$. If 
\begin{enumerate}
\item $X(\theta) \le_{\textup{st:wj}} Y(\theta)$, for all $ \theta \in S$,
\item $X(\theta) - Y(\theta)$ or $Y(\theta) - X(\theta)$ are increasing in the usual stochastic dominance order in $\theta \in S$ and 
\item $\Theta_1 \le_{\textup{st}} \Theta_2$,
\end{enumerate}
then $X(\Theta_1) \le_{\textup{st:wj}} Y(\Theta_2)$. 
\end{The}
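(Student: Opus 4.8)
The plan is to reduce the statement to the preservation of the usual stochastic order under mixtures, applied to the one-parameter families of differences. First I would set $D(\theta):=X(\theta)-Y(\theta)$, so that $Y(\theta)-X(\theta)=-D(\theta)$, and record what each object means after mixing: writing $D(\Theta_i)$ for the random variable obtained by mixing $D(\theta)$ against the distribution of $\Theta_i$, the defining condition of Definition \ref{wjhr} turns the three hypotheses into pure statements about $\{D(\theta)\}_{\theta\in S}$. Indeed, hypothesis 1 becomes $D(\theta)\le_{\textup{st}}-D(\theta)$ for every $\theta\in S$; hypothesis 2 says that one of the families $\{D(\theta)\}$, $\{-D(\theta)\}$ is increasing in $\theta$ with respect to $\le_{\textup{st}}$; and hypothesis 3 is simply $\Theta_1\le_{\textup{st}}\Theta_2$.

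The key observation is that the conclusion $X(\Theta_1)\le_{\textup{st:wj}}Y(\Theta_2)$ is, by Definition \ref{wjhr}, exactly the assertion $D(\Theta_1)\le_{\textup{st}}-D(\Theta_2)$, i.e.\ the mixed difference of the left-hand vector (mixed over $\Theta_1$) is stochastically dominated by the mixed difference of the right-hand vector (mixed over $\Theta_2$). I would prove this by transitivity, treating the two branches of hypothesis 2 separately. Suppose first that $-D(\theta)=Y(\theta)-X(\theta)$ is st-increasing. Mixing the pointwise inequality $D(\theta)\le_{\textup{st}}-D(\theta)$ against the single distribution of $\Theta_1$ gives $D(\Theta_1)\le_{\textup{st}}-D(\Theta_1)$, by closure of $\le_{\textup{st}}$ under mixtures with a common mixing variable (Theorem 1.A.3(d) in Shaked and Shanthikumar, 2007). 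Since $-D(\theta)$ is st-increasing and $\Theta_1\le_{\textup{st}}\Theta_2$, comparison of mixtures against ordered mixing variables yields $-D(\Theta_1)\le_{\textup{st}}-D(\Theta_2)$, and transitivity gives $D(\Theta_1)\le_{\textup{st}}-D(\Theta_2)$, as required. In the symmetric branch, where $D(\theta)$ is st-increasing, I would instead first obtain $D(\Theta_1)\le_{\textup{st}}D(\Theta_2)$ from $\Theta_1\le_{\textup{st}}\Theta_2$ and then mix hypothesis 1 over $\Theta_2$ to get $D(\Theta_2)\le_{\textup{st}}-D(\Theta_2)$; this is exactly why the ``or'' in hypothesis 2 suffices.

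Equivalently, the whole argument can be quoted in one line as the combined comparison-of-mixtures theorem for the usual stochastic order (see Theorem 1.A.6 in Shaked and Shanthikumar, 2007), applied with the families $A(\theta)=D(\theta)$ and $B(\theta)=-D(\theta)$: pointwise $A(\theta)\le_{\textup{st}}B(\theta)$, one of $A,B$ is st-increasing, and $\Theta_1\le_{\textup{st}}\Theta_2$ together give $A(\Theta_1)\le_{\textup{st}}B(\Theta_2)$.

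The only genuinely delicate point is interpretational rather than computational: one must read $X(\Theta_1)\le_{\textup{st:wj}}Y(\Theta_2)$ as the comparison of the two separately mixed differences $D(\Theta_1)$ and $-D(\Theta_2)$ (each difference formed with a single mixing variable) rather than as a within-vector statement about one jointly mixed pair, and then verify that the three hypotheses map verbatim onto the inputs of the mixture results. Once this translation is in place, no further calculation is needed and the theorem follows immediately.
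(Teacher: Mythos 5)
Your proof is correct and takes essentially the same route as the paper: both reduce the statement to the difference families $Z(\theta)=X(\theta)-Y(\theta)$ and $Z'(\theta)=Y(\theta)-X(\theta)$ and invoke the standard comparison-of-mixtures theorem for $\le_{\textup{st}}$ (the paper cites Theorem 2.2.8 of Belzunce et al., 2016a, where you cite Theorem 1.A.6 of Shaked and Shanthikumar, 2007, and additionally spell out the two transitivity branches covering the ``or'' in hypothesis 2). Your interpretational remark --- reading $X(\Theta_1)\le_{\textup{st:wj}}Y(\Theta_2)$ as the comparison of the two separately mixed differences $D(\Theta_1)$ and $-D(\Theta_2)$ --- matches exactly what the paper's appeal to the mixture theorem delivers.
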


\begin{proof} Let us consider the random variables $Z(\mathbf \theta)=X(\theta) - Y(\theta)$ and $Z'(\mathbf \theta)=Y(\theta) - X(\theta)$. Then, from Assumption 1, we have that $Z(\mathbf \theta)\le_{\textup{st}} Z'(\mathbf \theta)$, for all $\theta \in S$. From Assumption 2, we have that $E[\phi(Z(\theta))]$ or $E[\phi(Z'(\theta))]$ are increasing in $\theta \in S$, for any increasing function $\phi$. The result follows now from Theorem 2.2.8, in Belzunce et al. (2016a).
\end{proof}

This result is useful to provide comparisons in the presence of covariates. The result can be easily extended  to the case where $\mathbf \Theta_1$ and  $\mathbf \Theta_2$ are two $n$ dimensional random vectors with common support $S \subseteq \mathbb R^n$, and it can be used to provide examples of bivariate vectors which are ordered according to the weak joint stochastic dominance.

Moreover, it can be used, jointly with the preservation under convolution, to provide a preservation result under compounding. In particular the two previous results can be used to provide the following theorem.

\begin{The} Let $(X_1,Y_1),(X_2,Y_2),\ldots,(X_n,Y_n)$ be a set of independent bivariate random vectors. If $X_i\le_{\textup{st:wj}}Y_i$, for all $i=1,2,\ldots,n$, and $N$ and $M$ two integer-valued random variables independent of the random variables $(X_1,Y_1),(X_2,Y_2),\ldots,(X_n,Y_n)$. If $X_i\le_{\textup{st:wj}}Y_i$, for all $i=1,2,\ldots,n$ and $N\le_{\textup{st}}M$ then  
\[
\sum_{i=1}^N X_i \le_{\textup{st:wj}}  \sum_{i=1}^M Y_i.
\]
\end{The}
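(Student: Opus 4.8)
The plan is to read this statement as a \emph{mixture} of the deterministic partial sums and to obtain it by feeding the preservation under convolution, established in the preceding theorem, into the mixture result of Theorem~\ref{theta}. The guiding idea is that $\sum_{i=1}^{N}X_i$ and $\sum_{i=1}^{M}Y_i$ are randomizations, over the values of $N$ and $M$, of the fixed-length sums $\sum_{i=1}^{k}X_i$ and $\sum_{i=1}^{k}Y_i$, so the two previous theorems should supply exactly the three hypotheses needed to invoke Theorem~\ref{theta}.

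Concretely, let $S$ be the common support of $N$ and $M$ and, for $k\in S$, set
\[
\bigl(X(k),Y(k)\bigr)=\Bigl(\sum_{i=1}^{k}X_i,\ \sum_{i=1}^{k}Y_i\Bigr),
\]
with the empty sum equal to $0$. Then $X(N)=\sum_{i=1}^{N}X_i$ and $Y(M)=\sum_{i=1}^{M}Y_i$, so the conclusion is precisely $X(N)\le_{\textup{st:wj}}Y(M)$, the output of Theorem~\ref{theta} with $\Theta_1=N$ and $\Theta_2=M$. Of its three hypotheses, (3) is the standing assumption $N\le_{\textup{st}}M$, and (1), namely $X(k)\le_{\textup{st:wj}}Y(k)$ for every $k\in S$, is exactly the preservation under convolution applied to the independent vectors $(X_1,Y_1),\dots,(X_k,Y_k)$. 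Thus the whole argument reduces to checking hypothesis~(2).

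Hypothesis~(2) is the step I expect to be the main obstacle: it asks that $Y(k)-X(k)=\sum_{i=1}^{k}(Y_i-X_i)$ (or its negative) be increasing in $k$ with respect to $\le_{\textup{st}}$. Writing $D_i=Y_i-X_i$, this amounts to $\sum_{i=1}^{k}D_i\le_{\textup{st}}\sum_{i=1}^{k}D_i+D_{k+1}$ for independent summands, which forces $D_{k+1}\ge 0$ almost surely. The hypothesis $X_i\le_{\textup{st:wj}}Y_i$ only yields $-D_i\le_{\textup{st}}D_i$ and hence $E[D_i]\ge 0$ (Remark~\ref{2.4}), which is strictly weaker. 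I would therefore either carry the natural extra assumption $Y_i\ge X_i$ almost surely for every $i$ (so that the $\sum_{i=1}^{k}D_i$ branch of~(2) holds and Theorem~\ref{theta} applies verbatim), or impose directly that the partial sums be $\le_{\textup{st}}$-monotone in the number of terms; this is the exact analogue of the nonnegativity of the summands required in the classical $\le_{\textup{st}}$ compounding theorem, and it is the one place where combining the two earlier results is not purely formal.
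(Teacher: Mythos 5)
Your reconstruction is exactly the route the paper intends: the paper gives no actual proof of this theorem, only the preceding sentence that ``the two previous results can be used'', i.e., the convolution theorem feeding hypothesis (1) of Theorem \ref{theta} with $\Theta_1=N$, $\Theta_2=M$. Your diagnosis that hypothesis (2) is the step which does not follow is also correct, and the gap is genuine rather than cosmetic: the theorem is false as stated. Take $X_i\equiv -6$, $Y_i\equiv -5$, $N\equiv 0$, $M\equiv 1$. Then $X_i-Y_i\equiv -1\le_{\textup{st}} 1\equiv Y_i-X_i$, so $X_i\le_{\textup{st:wj}}Y_i$, and $N\le_{\textup{st}}M$; but $\sum_{i=1}^N X_i=0$, $\sum_{i=1}^M Y_i=-5$, and $0\le_{\textup{st:wj}}-5$ would mean $5\le_{\textup{st}}-5$. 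Two refinements to your analysis. First, your parenthetical claim that $S_k\le_{\textup{st}}S_k+D_{k+1}$ with independent summands forces $D_{k+1}\ge 0$ a.s.\ is not literally true for every law of $S_k$ (with exponential-type tails it can hold while $P(D_{k+1}<0)>0$); what is true, and all your objection needs, is that it is forced when $0$ lies in the common support of $N$ and $M$, via $0\le_{\textup{st}}D_1$. Second, and more importantly, observe that the counterexample above satisfies your proposed patch $Y_i\ge X_i$ a.s.\ (here $D_i\equiv 1$), so the patch does not rescue the literal statement either.

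The reason is a second mismatch that your write-up takes on trust when you say the conclusion ``is precisely $X(N)\le_{\textup{st:wj}}Y(M)$, the output of Theorem \ref{theta}''. What the proof of Theorem \ref{theta} actually establishes, via the mixture-preservation theorem applied to $Z(\theta)=X(\theta)-Y(\theta)$ and $Z'(\theta)=Y(\theta)-X(\theta)$, is $P\left(X(\Theta_1)-Y(\Theta_1)>t\right)\le P\left(Y(\Theta_2)-X(\Theta_2)>t\right)$ for all $t$; in the compound setting this reads $\sum_{i=1}^N(X_i-Y_i)\le_{\textup{st}}\sum_{i=1}^M(Y_i-X_i)$, which is not the assertion $\sum_{i=1}^N X_i-\sum_{i=1}^M Y_i\le_{\textup{st}}\sum_{i=1}^M Y_i-\sum_{i=1}^N X_i$ demanded by Definition \ref{wjhr}; the two differ whenever $N\ne M$ (in the example above the former is the true statement $0\le_{\textup{st}}1$, while the latter fails). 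Indeed the law of $\sum_{i=1}^N X_i-\sum_{i=1}^M Y_i$ depends on the joint distribution of $(N,M)$, which the statement never specifies. A hypothesis set that does yield the literal conclusion, bypassing Theorem \ref{theta} altogether, is: assume in addition $P(N\le M)=1$ and $-Y_i\le_{\textup{st}}Y_i$ (e.g.\ $Y_i\ge 0$). Conditioning on $(N,M)=(n,m)$ with $n\le m$ (legitimate since $(N,M)$ is independent of the sequence), one has $\sum_{i=1}^m Y_i-\sum_{i=1}^n X_i=\sum_{i=1}^n(Y_i-X_i)+\sum_{i=n+1}^m Y_i$, a sum of independent terms each of which stochastically dominates its own negation, so the convolution theorem gives the reflection inequality for each $(n,m)$, and averaging over $(n,m)$ preserves it; on that route no $\le_{\textup{st}}$-monotonicity in the number of summands, and no sign condition on $Y_i-X_i$, is needed.
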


Finally, we show that the new criterion is preserved under convergence in distribution. 

\begin{The} Let $\{(X_n, Y_n)\}_{n\in \mathbb N}$ be a sequence of bivariate random vectors which converges in distribution to a bivariate random vector $(X,Y)$. If $X_n\le_{\textup{st:wj}}Y_n$, for all $n\in \mathbb N$, then $X\le_{\textup{st:wj}}Y$.
\end{The}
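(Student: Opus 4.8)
The plan is to reduce the claim to the classical fact that the usual stochastic order $\le_{\textup{st}}$ is closed under convergence in distribution, after transporting the hypothesis through the difference map. By Definition \ref{wjhr}, the assumption $X_n \le_{\textup{st:wj}} Y_n$ means precisely $X_n - Y_n \le_{\textup{st}} Y_n - X_n$, and the desired conclusion $X \le_{\textup{st:wj}} Y$ means $X - Y \le_{\textup{st}} Y - X$. Hence the whole problem can be rephrased in terms of the one-dimensional differences, and it suffices to control their limiting behaviour.

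First I would apply the continuous mapping theorem. Since the map $h:\mathbb{R}^2 \to \mathbb{R}$ defined by $h(x,y)=x-y$ is continuous and $(X_n,Y_n)$ converges in distribution to $(X,Y)$, it follows that $X_n - Y_n = h(X_n,Y_n)$ converges in distribution to $X - Y$. Applying the same argument to the continuous map $(x,y)\mapsto y-x$, I likewise obtain that $Y_n - X_n$ converges in distribution to $Y - X$.

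Next I would invoke (or reprove) the closure of $\le_{\textup{st}}$ under weak limits. Writing $F_U$ for the distribution function of a random variable $U$, the hypothesis $X_n - Y_n \le_{\textup{st}} Y_n - X_n$ reads $F_{X_n - Y_n}(t) \ge F_{Y_n - X_n}(t)$ for every $t \in \mathbb{R}$. If $t$ is a common continuity point of $F_{X-Y}$ and $F_{Y-X}$, then passing to the limit yields $F_{X-Y}(t) = \lim_n F_{X_n-Y_n}(t) \ge \lim_n F_{Y_n - X_n}(t) = F_{Y-X}(t)$. Because the discontinuity set of each distribution function is at most countable, such common continuity points are dense in $\mathbb{R}$; the right-continuity of distribution functions then propagates the inequality $F_{X-Y}(t) \ge F_{Y-X}(t)$ to every $t \in \mathbb{R}$. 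This is exactly $X - Y \le_{\textup{st}} Y - X$, that is, $X \le_{\textup{st:wj}} Y$.

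I do not anticipate any genuine obstacle, since both ingredients are classical: the continuous mapping theorem, and the weak-limit closure of $\le_{\textup{st}}$ (see, e.g., Theorem 1.A.3 in Shaked and Shanthikumar, 2007, or Belzunce et al., 2016a), which one could simply cite in place of the continuity-point argument above. The only mild care required is the handling of the countably many discontinuity points of the limiting distribution functions, and this is precisely what the density of common continuity points together with right-continuity resolves.
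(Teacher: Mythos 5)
Your proposal is correct and follows essentially the same route as the paper: apply the continuous mapping theorem to obtain $X_n-Y_n \Rightarrow X-Y$ and $Y_n-X_n \Rightarrow Y-X$, then conclude by the closure of $\le_{\textup{st}}$ under convergence in distribution, which the paper cites as Theorem 2.2.7 in Belzunce et al.\ (2016a). Your extra continuity-point and right-continuity argument is a correct self-contained substitute for that citation, but adds nothing structurally different.
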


\begin{proof} If $\{(X_n, Y_n)\}_{n\in \mathbb N}$ converges in distribution to a bivariate random vector $(X,Y)$, then 
$X_n-Y_n$ converges in distribution to $X-Y$ and $Y_n-X_n$ converges in distribution to $Y-X$. The result follows for the preservation of the usual stochastic dominance under convergence (see Theorem 2.2.7 in Belzunce et al., 2016a). 
\end{proof}

It would be interesting to find an integral representation of this new stochastic order. In particular, it would be interesting to find a characterization in the sense of the definition of joint stochastic orders provided by Shanthikumar and Yao (1991) but we have not been able to provide it and it remains as an open problem.

\section{A non parametric asymptotic test for the weak joint stochastic dominance}

In this section, we develop a non parametric asymptotic test for testing the weak joint stochastic dominance criterion.  Given that the focus is on the comparison of two dependent random variables $X$ and $Y$, our purpose is to develop a test based on a paired random sample, $\left\{ (X_i,Y_i)\right\}_{i=1}^n$, of $(X,Y)$. We will assume that $Y-X$ (and, obviously, $X-Y$) has a continuous distribution function. A remark will be given at the end of this section for the cases of integer-valued and ordinal random variables.

Under these assumptions, the goal of this section is to provide a non parametric test for testing the null hypothesis 
\[
H_0: X\le_{\textup{st:wj}}Y 
\]
against the alternative hypothesis
\[
H_1: X \nleq_{\textup{st:wj}}Y,
\]
based on the paired sample $\left\{ (X_i,Y_i)\right\}_{i=1}^n$.  Taking into account Remark \ref{2.2}  we observe that the previous test is equivalent to test the null hypothesis 
\[
H_0: P(X-Y>t) \le P(Y-X>t),\text{ for all }t\in[0,+\infty) 
\]
against the alternative hypothesis
\[
H_1: P(X-Y>t) > P(Y-X>t),\text{ for some }t\in[0,+\infty).
\]

Following Barret and Donald (2003), we propose a Kolmogorov-Smirnov type test of significance. Prior to the definition of the test statistic, let us fix some notation and make some observations. Let us denote by $F(t)$ and $G(t)$ the distribution functions of $X-Y$ and $Y-X$, respectively.  Now, given that $\left\{ X_i-Y_i\right\}_{i=1}^n$ and $\left\{ Y_i-X_i\right\}_{i=1}^n$ are random samples of $X-Y$ and $Y-X$, respectively, the corresponding empirical distributions, $F_n$ and $G_n$,  can be used to both estimate their distribution functions  and propose a test statistic for the previous hypothesis. Under these notations, the empirical survival functions of $X-Y$ and $Y-X$ are given by
\[
\overline F_n(t)=1 - F_n(t)=\sum_{i=1}^n \frac{I_{(t,+\infty)}(X_i - Y_i)}{n} 
\]
and 
\[
\overline G_n(t)= 1 - G_n(t)=\sum_{i=1}^n \frac{I_{(t,+\infty)}(Y_i - X_i)}{n} ,
\]
respectively, where $I_{(t,+\infty)}(\cdot)$ is the indicator function on the interval $(t,+\infty)$. According to this, a reasonable Kolmogorov-Smirnov type test statistic is the following
\[
S_n^{st:wj}= \sqrt n \sup_{t\in[0,+\infty)} \left\{ \overline F_n(t) - \overline G_n(t) \right\},
\]
where the decision rule is to reject $H_0$ if $S_n^{st:wj}>c$ and the critical value $c$ would be determined in terms of the distribution of $S_n^{st:wj}$. However, it is not feasible to obtain the exact distribution of such statistic, and we would rather use the asymptotic distribution. Next, we provide an asymptotic upper bound for $P(S_n^{st:wj}>c)$. 

\begin{Pro} Following the previous notation and under $H_0$, we have that 
\[
\lim_{n\rightarrow \infty}P\left(S_n^{st:wj}>c\right) \le P\left(\sup_{t\in [0,+\infty)}\{GP(t)\}>c\right),
\]
for any $c\in \mathbb R$, being $GP=\{GP(t), t\in[0,+\infty)\}$ a Gaussian process where, for given $t_1,t_2,\ldots,t_k\in [0,+\infty)$, $(GP(t_1),GP(t_2),\ldots,GP(t_k))$ is a $k$-dimensional multivariate normal random vector with zero mean vector and covariance matrix $\Sigma$, such that
\[
\Sigma_{ii}=Var[GP(t_i)]=\overline F(t_i) + \overline G(t_i) - \left( \overline F(t_i) - \overline G(t_i) \right)^2, \text{ for }i=1,2,\ldots,k,
\]
and 
\[
\Sigma_{ij}=Cov(GP(t_i), GP(t_j))=\overline F(t_i \vee t_j) +  \overline G(t_i \vee t_j ) -(\overline F(t_i) - \overline G(t_i))(\overline F(t_j) - \overline G(t_j)),
\]
where $x \vee y=\max\{x,y\}$.
\end{Pro}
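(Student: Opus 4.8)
The plan is to recenter the statistic about its population counterpart, discard the non-positive drift supplied by $H_0$, and then identify the weak limit of the resulting empirical process. First I would set $W_i=X_i-Y_i$, so that $Y_i-X_i=-W_i$ and both empirical survival functions are functionals of the single i.i.d. sample $\{W_i\}_{i=1}^n$: writing $\xi_i(t)=I_{(t,+\infty)}(W_i)-I_{(t,+\infty)}(-W_i)$ we have $\overline F_n(t)-\overline G_n(t)=n^{-1}\sum_{i=1}^n\xi_i(t)$ and, by continuity of the law of $W_i$, $E[\xi_i(t)]=\overline F(t)-\overline G(t)$. Introducing the centered process
\[
D_n(t)=\sqrt n\Bigl(\overline F_n(t)-\overline G_n(t)-\bigl(\overline F(t)-\overline G(t)\bigr)\Bigr),
\]
one has the exact identity $S_n^{st:wj}=\sup_{t\ge 0}\{D_n(t)+\sqrt n(\overline F(t)-\overline G(t))\}$. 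Under $H_0$ we have $\overline F(t)\le\overline G(t)$ for every $t\ge 0$, so the drift term is non-positive and therefore $S_n^{st:wj}\le\sup_{t\ge 0}D_n(t)$; this yields the inequality $P(S_n^{st:wj}>c)\le P(\sup_{t\ge 0}D_n(t)>c)$, valid for each $n$.

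Next I would identify the weak limit of $D_n$ in $\ell^\infty([0,+\infty))$. Here $D_n$ is, up to sign, the difference of the two empirical survival processes of the sample $\{W_i\}$, indexed respectively by the half-lines $(t,+\infty)$ and $(-\infty,-t)$; each is a classical Donsker class and they converge jointly, so $D_n\Rightarrow GP$ for a tight, mean-zero Gaussian process $GP$ whose covariances are those of $\xi_i$. The covariance computation is the arithmetic core. For the variance, the events $\{W_i>t\}$ and $\{W_i<-t\}$ are disjoint for $t\ge 0$, hence $\xi_i(t)^2=I_{(t,+\infty)}(W_i)+I_{(t,+\infty)}(-W_i)$ and $E[\xi_i(t)^2]=\overline F(t)+\overline G(t)$, giving
\[
Var[\xi_i(t)]=\overline F(t)+\overline G(t)-\bigl(\overline F(t)-\overline G(t)\bigr)^2,
\]
which is $\Sigma_{ii}$. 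For the cross term I would take $t_i\le t_j$ and again invoke disjointness: the mixed products $I_{(t_i,+\infty)}(W_i)I_{(t_j,+\infty)}(-W_i)$ and $I_{(t_i,+\infty)}(-W_i)I_{(t_j,+\infty)}(W_i)$ vanish, while $I_{(t_i,+\infty)}(W_i)I_{(t_j,+\infty)}(W_i)=I_{(t_j,+\infty)}(W_i)$ and $I_{(t_i,+\infty)}(-W_i)I_{(t_j,+\infty)}(-W_i)=I_{(t_j,+\infty)}(-W_i)$, whence $\xi_i(t_i)\xi_i(t_j)=I_{(t_j,+\infty)}(W_i)+I_{(t_j,+\infty)}(-W_i)$; taking expectations gives $E[\xi_i(t_i)\xi_i(t_j)]=\overline F(t_i\vee t_j)+\overline G(t_i\vee t_j)$ and the stated $\Sigma_{ij}$ follows after subtracting the product of the means.

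Finally, since $f\mapsto\sup_{t\ge 0}f(t)$ is continuous for the uniform topology on $\ell^\infty([0,+\infty))$, the continuous mapping theorem gives $\sup_{t\ge 0}D_n(t)\Rightarrow\sup_{t\ge 0}GP(t)$, and combining this with the per-$n$ inequality and the Portmanteau theorem one obtains $\limsup_n P(S_n^{st:wj}>c)\le\limsup_n P(\sup_{t\ge 0}D_n(t)>c)\le P(\sup_{t\ge 0}GP(t)>c)$, which is the stated bound (an equality with $\lim$ at continuity points $c$ of the law of the limiting supremum). I expect the main obstacle to be the functional central limit theorem on the non-compact index set $[0,+\infty)$: one must establish asymptotic tightness of $D_n$ uniformly up to $t=+\infty$, where $\overline F$ and $\overline G$ both vanish, and confirm that the supremum functional remains continuous there. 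This is precisely where the Donsker property of the half-line indicator classes carries the argument, and it is the only step that is not a routine moment computation.
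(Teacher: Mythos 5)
Your proposal is correct and takes essentially the same approach as the paper: the same centered process (your $D_n$ is the paper's $\Delta_n$), the same almost-sure bound $S_n^{st:wj}\le\sup_{t\ge 0}D_n(t)$ under $H_0$, the same covariance computation via the $\{-1,0,1\}$-valued summands (your $\xi_i(t)$ is the paper's $l_t(X_i,Y_i)$), and the same Donsker-plus-continuous-mapping identification of the Gaussian limit of the supremum. Your explicit caveats about continuity points of the law of $\sup_{t\ge 0}GP(t)$ and about tightness over the non-compact index set $[0,+\infty)$ address points the paper glosses over, so your write-up is if anything slightly more careful.
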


\begin{proof} First, we consider the weak convergence of the process $\Delta_n=\{\Delta_n(t);t\in[0,+\infty)\}$, where $\Delta_n(t)=\sqrt n \left(\overline F_n(t) - \overline G_n(t) - (\overline F(t) - \overline G(t))\right)$, which is centered at zero.  The stochastic process $\Delta_n$ has trajectories in the Skorokhod space $D([0,+\infty))$ of cadlag real valued functions with domain $[0,+\infty)$. It is well known that the processes $E_n(F)=\{F_n(t);t\in[0,+\infty)\}$ and $E_n(G)=\{G_n(t);t\in[0,+\infty)\}$ converge weakly, in the metric space $D([0,+\infty))$ with the uniform metric, to Brownian processes $B$ and $B'$, respectively. Then, from Theorem 1.4.8 by van der Vaart and Wellner (1996), we have that $(E_n(F), E_n(G))$ converges weakly to $(B, B')$, and by the continuity mapping theorem (CMT) we have that $\Delta_n=E_n(G) - E_n(F)$ converges weakly to $GP=B-B'$.  Let us identify now the limiting stochastic process $GP$. First, we observe that, 
\[
\overline F_n(t) - \overline G_n(t) = \sum_{i=1}^n \frac{l_t(X_i,Y_i)}{n},
\]
where $l_t(x,y)= I_{(t,+\infty)}(x-y) - I_{(t,+\infty)}(y-x)$ and the distribution of the random variable $l_t(X,Y)$ is given by
\begin{center}
$\left\{ \begin{tabular}{l c l}
1 & with probability & $\overline F (t)$, \\ 
0 & \text{with probability} & $G(t) - \overline F(t)$, \\ 
-1 & \text{with probability} & $\overline G(t)$. \\ 
\end{tabular} \right.$
\end{center}

Therefore, 
\[
E[l_t(X,Y)]= \overline F(t) - \overline G(t),
\]
\begin{equation}\label{eq-var}
Var[l_t(X,Y))]=\overline F(t) + \overline G(t) - \left( \overline F(t) - \overline G(t) \right)^2
\end{equation}
and 
\begin{equation}\label{eq-covar}
Cov(l_t(X,Y), l_{t'}(X,Y))=\overline F(t \vee t') +  \overline G(t \vee t' ) -(\overline F(t) - \overline G(t))(\overline F(t') - \overline G(t')),
\end{equation}
for any $t,t' \in[0,+\infty)$. Now, applying the multivariate central limit theorem (MCLT) to 
\[
\Delta_n(t)= \sqrt{n} \left(\frac{\sum_{i=1}^n l_t(X_i,Y_i)}{n} - E[l_t(X,Y)]\right),
\]
we have that $(\Delta_n(t_1), \Delta_n(t_2), \ldots,$ $\Delta_n(t_k))$ converges in distribution (weakly) to a multivariate normal random vector, for any $t_1, t_2,\ldots,t_k \in [0,+\infty)$, with zero mean vector and covariance matrix $\mathbf \Sigma$ given by  $\mathbf \Sigma_{ii}=\overline F(t_i) + \overline G(t_i) - \left( \overline F(t_i) - \overline G(t_i) \right)^2$ and $\mathbf \Sigma_{ij}= \overline F(t_i \vee t_j) +  \overline G(t_i \vee t_J) -(\overline F(t_i) - \overline G(t_i))(\overline F(t_j) - \overline G(t_j))$, for all $i,j=1,\ldots,n$, where the previous values follow from \eqref{eq-var} and \eqref{eq-covar}, respectively. 

Therefore, the limiting stochastic process is a Gaussian stochastic process. 

Now, given that the function $\sup(\cdot)$  is a continuous function on the Skorokhod space $D([0,+\infty))$ with the uniform metric, and using again the CMT, we have that  $\sup(\Delta_n)$ converges weakly (in distribution) to $\sup(GP)$.

Finally, under $H_0$, we have that $S_n^{st:wj} \le  \sup_{[0,+\infty)}(\Delta_n(t))$ (a.s), and therefore,
\[
\lim_{n\rightarrow \infty}P\left(S_n^{st:wj}>c\right) \le \lim_{n\rightarrow \infty}P\left(\sup_{t\in [0,+\infty)}\left\{\Delta_n(t)\right\}>c\right) = P\left(\sup_{t\in [0,+\infty)}\{GP(t)\}>c\right).
\]
\end{proof}

Now, we prove the consistency of the proposed test. 

\begin{Pro} Following the previous notation and under $H_1$, we have that 
\[
\lim_{n\rightarrow +\infty} P(S_n^{st:wj} > c)=1,
\]
for any $c\in \mathbb R$. 
\end{Pro}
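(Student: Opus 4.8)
The plan is to show that under $H_1$ the test statistic $S_n^{st:wj}$ diverges to $+\infty$ in probability, so that it eventually exceeds any fixed threshold $c$ with probability tending to one. The starting point is the alternative hypothesis itself: $H_1$ asserts that there exists some $t_0 \in [0,+\infty)$ for which $\overline F(t_0) > \overline G(t_0)$, that is, the population gap $\delta := \overline F(t_0) - \overline G(t_0) > 0$ is strictly positive at that point. The entire argument hinges on evaluating the supremum at this single favourable point $t_0$.

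First I would bound the supremum from below by its value at $t_0$:
\[
S_n^{st:wj} = \sqrt{n}\,\sup_{t\in[0,+\infty)}\left\{\overline F_n(t) - \overline G_n(t)\right\} \ge \sqrt{n}\left(\overline F_n(t_0) - \overline G_n(t_0)\right).
\]
Next I would invoke the strong law of large numbers (or the weak law suffices here) applied to the i.i.d.\ averages $\overline F_n(t_0) = \frac1n\sum_{i=1}^n I_{(t_0,+\infty)}(X_i - Y_i)$ and $\overline G_n(t_0) = \frac1n\sum_{i=1}^n I_{(t_0,+\infty)}(Y_i - X_i)$, both of which are sample means of bounded i.i.d.\ indicators. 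These converge almost surely (hence in probability) to $\overline F(t_0)$ and $\overline G(t_0)$, respectively, so that $\overline F_n(t_0) - \overline G_n(t_0) \to \delta > 0$ almost surely. The factor $\sqrt{n}$ multiplying a quantity that converges to a strictly positive constant then forces the lower bound $\sqrt{n}\,\bigl(\overline F_n(t_0) - \overline G_n(t_0)\bigr)$ to diverge to $+\infty$ almost surely, and therefore $S_n^{st:wj} \to +\infty$ as well.

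To make the probabilistic conclusion precise, I would fix an arbitrary $c \in \mathbb R$ and write, for any $0 < \varepsilon < \delta$,
\[
P\!\left(S_n^{st:wj} > c\right) \ge P\!\left(\sqrt{n}\,\bigl(\overline F_n(t_0) - \overline G_n(t_0)\bigr) > c\right) \ge P\!\left(\overline F_n(t_0) - \overline G_n(t_0) > \delta - \varepsilon\right),
\]
where the last inequality holds for all $n$ large enough that $\sqrt{n}\,(\delta - \varepsilon) > c$. By the law of large numbers the right-hand probability tends to $1$, which yields $\lim_{n\to\infty} P(S_n^{st:wj} > c) = 1$.

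There is no serious obstacle here; the only point requiring mild care is the measurability and well-definedness of $t_0$, which is guaranteed directly by the negation of $H_0$, and the bookkeeping of ``for $n$ large enough'' in the diverging-threshold step. The argument is entirely pointwise: it never needs the full weak convergence of the process from the previous proposition, only the one-dimensional consistency of the empirical survival functions at the single separating point $t_0$.
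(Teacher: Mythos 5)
Your proof is correct, but it takes a genuinely more elementary route than the paper's. The paper argues globally: it asserts (a step that in fact requires Glivenko--Cantelli-type uniform convergence of $F_n$ and $G_n$) that $\sup_{t\in[0,+\infty)}\{\overline F_n(t)-\overline G_n(t)\}$ converges almost surely to $\sup_{t\in[0,+\infty)}\{\overline F(t)-\overline G(t)\}>0$, concludes that $S_n^{\textup{st:wj}}\rightarrow+\infty$ almost surely, and then passes to $\lim_{n}P(S_n^{\textup{st:wj}}>c)=1$ via Fatou's lemma for events, $\liminf_{n}P(A_n)\ge P(\liminf_{n}A_n)$. You instead never touch the full supremum as a random object: you bound it below at the single separating point $t_0$ guaranteed by the negation of $H_0$, apply only the pointwise law of large numbers to the bounded i.i.d.\ indicator averages $\overline F_n(t_0)$ and $\overline G_n(t_0)$, and handle the $\sqrt n$ factor with explicit $\varepsilon$/large-$n$ bookkeeping, $\sqrt n(\delta-\varepsilon)>c$ eventually. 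What each approach buys: yours is self-contained and sidesteps the paper's ``it is not difficult to see'' uniform-convergence step, which is the only delicate point in their argument (moving a limit inside a supremum over a noncompact set); theirs, once that step is granted, yields the slightly stronger conclusion that the statistic diverges almost surely rather than merely in probability. Both establish exactly the claimed consistency, and your version is arguably the cleaner one as written.
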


\begin{proof} First we observe that under $H_1$ there exits a $t_0$ such that $\overline F(t_0) - \overline G(t_0) >0$, and therefore 
\[
 \sup_{t\in[0,+\infty)} \left\{ \overline F(t) - \overline G(t) \right\} >0.
\]

It is not difficult to see that 
\[
 \lim_{n\rightarrow +\infty} \sup_{t\in[0,+\infty)} \left\{ \overline F_n(t) - \overline G_n(t) \right\} =  \sup_{t\in[0,+\infty)} \left\{ \overline F(t) - \overline G(t) \right\} >0, \text{ a.s.}, 
 \]
therefore 
\[
 \lim_{n\rightarrow +\infty} \sqrt{n} \sup_{t\in[0,+\infty)} \left\{ \overline F_n(t) - \overline G_n(t) \right\} =  +\infty \text{ a.s.}. 
 \]
 
 Now the result follows observing that 
\[
\lim_{n\rightarrow +\infty} \inf P(S_n^{st:wj} > c) \ge P(\lim_{n\rightarrow +\infty} \inf \{S_n^{st:wj} > c\}) =1,
\]
for any $c\in \mathbb R$.

\end{proof}

In order to compute the upper bound of the $p$-value for the test, we need to compute a probability for the supreme of the limiting Gaussian process described in the previous result. Recall that the covariance matrix is unknown, hence we propose to replace the covariance by the consistent estimation provided by the sample covariance, that is, the values are obtained replacing the distribution functions $F$ and $G$ by their corresponding empirical distribution functions $F_n$ and $G_n$. Now, following Hansen (1996) and Barret and Donald (2003), we can use Monte-Carlo methods to provide simulations of the Gaussian process and then use a grid to approximate the probability. More specifically, select a grid $0=t_0<t_1<\cdots<t_k$ on $[0,+\infty)$, then provide a set of $N$ simulations of the multivariate random vector $(GP(t_0),\ldots,GP(t_k))$ and, finally, compute the upper bound of the $p$-value based on the maximum of each simulation as follows. Let $S_1,S_2,\cdots,S_N$ denote the maximum of each one of the simulations, then the upper bound can be approximated by 
\[
P\left(\sup_{t\in [0,+\infty)}\{GP(t)\}>c\right) \approx \frac{\sum_{i=1}^N I_{(c,+\infty)}S_i}{N}.
\]

The number of grid points can be defined by the user to increase the accuracy of the previous approximation. For the simulation of the multivariate random vector we have used \texttt{R} 3.6.3 (\texttt{R} Core Team, 2020) and the \texttt{MASS} (v7.3-51.51; Ripley et al., 2019) package. Recall that the simulation heavily depends on the computations related with the covariance matrix, which is time consuming. An alternative to this approach is the following. Given the grid $0=t_0<t_1<\cdots<t_k$ on $[0,+\infty)$, $P\left(\sup_{t\in [0,+\infty)}\{GP(t)\}>c\right)$ is approximated by  $P(\max(GP(t_0),\ldots,GP(t_k))>c)$ which in turn is approximated in terms of the empirical distribution based on the simulations. This probability can be computed as 
\[
P(\max(GP(t_0),\ldots,GP(t_k))>c)=1 - P(GP(t_0)\le c,\ldots,GP(t_k))\le c),
\]
that is, in terms of the multivariate distribution function of $(GP(t_0),\ldots,GP(t_k))$ at the point $(c,\ldots,c)$. This can be computed using  \texttt{R} 3.6.3 (\texttt{R} Core Team, 2020) and the \texttt{mvtnorm} (v1.1-0; Genz et al., 2020) package, which requires less computations for the covariance matrix and, therefore, it is less time consuming. 

To sum up, in order to test if $X\le_{\textup{st:wj}}Y$  for two dependent random variables with continuous distributions and  finite right extreme of the supports $T$, we propose to test
\[
H_0: P(X-Y>t) \le P(Y-X>t),\text{ for all }t\in[0,+\infty)
\]
against the alternative 
\[
H_1: P(X-Y>t) > P(Y-X>t),\text{ for some }t\in[0,+\infty),
\]
\noindent with the test statistic $\sqrt n \sup_{t\in[0,+\infty)} \left\{ \overline F_n(t) - \overline G_n(t) \right\}$ following the next steps: 
\begin{itemize}

\item[\textbf{S1.}] Take a bivariate random sample $\{(x_i,y_i)\}_{i=1}^n$ from $(X,Y)$.

\item[\textbf{S2.}] Compute the value $S_n^{st:wj}= \sqrt n \sup_{t\in[0,+\infty)} \left\{ \overline F_n(t) - \overline G_n(t) \right\}$, where $F_n$ and $G_n$ are the empirical distributions corresponding to the values $\{x_i-y_i\}_{i=1}^n$ and $\{y_i-x_i\}_{i=1}^n$, respectively. Is not difficult to see that the $S_n^{st:wj}= \sqrt n \max_{z\in Z\cup \{0\}} \left\{ \overline F_n(z) - \overline G_n(z
) \right\}$, where $Z=\{x_i-y_i\}_{i=1}^n \cup \{y_i-x_i\}_{i=1}^n$. 

\item[\textbf{S3.}] Fix a grid $0=t_0<t_1<\cdots<t_k$ on $[0,\max\{Z\}]$, consider the multivariate random vector $(GP(t_0),\ldots, GP(t_k))$, with zero mean vector and covariance matrix given by $\mathbf \Sigma^n$, where $\mathbf \Sigma_{ii}^n=\overline F_n(t_i) + \overline G_n(t_i) - \left( \overline F_n(t_i) - \overline G_n(t_i) \right)^2$ and $\mathbf \Sigma_{ij}^n= \overline F_n(t_i \vee t_j) +  \overline G_n(t_i \wedge t_J) -(\overline F_n(t_i) - \overline G_n(t_i))(\overline F_n(t_j) - \overline G_n(t_j))$, and then provide an approximation of the upper bound for the $p$-value by either: 

a) providing a set of $N$ simulations of the random vector $(GP(t_0),\ldots,GP(t_k))$, and approximating the upper bound $p_1 =  \sum_{i=1}^N I_{(S_n^{st:wj},+\infty)}S_i/N$, where $S_1,S_2,\cdots,S_N$ denote the maximum for each one of the simulations, 

or 

b) the probability $p_2 = 1 - P(GP(t_0)\le c,\ldots,GP(t_k))\le c)$.

\item[\textbf{S4.}] Reject the null hypothesis if either $p_1$ or $p_2$ is small enough. 

\end{itemize}

It is important to notice that the previous procedure can be used, with some small modifications, for the cases of integer-valued or ordinal random variables. If $X$ and $Y$ are integer-valued, then the supreme should be taken over the whole real line and the convergence results holds. If $X$ and $Y$ are integer-valued or ordinal random variables with a finite number of possible values then $Y-X$ takes values on a finite set, let say, $\{t_0, t_1,\ldots,t_k\}$. In this case, the stochastic process $GP$ is just a $(k+1)$-dimensional random vector and there is no need of a grid for the approximation,  being the upper bound for the $p$-value given by $P(\max(GP(t_0),\ldots,GP(t_k))>c)=1 - P(GP(t_0)\le c,\ldots,GP(t_k))\le c)$, which can be done as described previously. To sum up, for discrete or ordinal random variables we should proceed with steps S1, S2,  S3.b and finish with S4, where in the case of non-finite integer-valued random variables the supreme is taken over the whole real line.

Next, we provide a Monte Carlo experiment to assess the behaviour of the test in several situations. 

\subsection{Monte Carlo results}

Along this subsection we perform some Monte Carlo experiments for small and large samples in order to show how our proposed test behaves in different situations. In addition, the new test is compared with some well known tests for paired data. On the one hand, recall that the new stochastic dominance criterion is equivalent to the comparison of the means, when the difference $Y-X$ is normally distributed (see Example \ref{belip}) and, therefore, the Student's t test for paired data is equivalent to test the st:wj criterion in such case. On the other hand, according to Remark \ref{2.4}, the new criterion also implies that the median of $Y-X$ is greater than 0 (or equivalently the median of $X-Y$ is smaller than 0) whenever $P(X=Y)=0$, and this comparison is tested by the WMW test. Therefore, it is natural to compare our test with the usual Student's t and WMW tests for paired data, where $X$ being smaller than $Y$ plays the role of the null hypothesis in both cases. 

The experiment is performed in several situations where the weak joint stochastic dominance either holds or does not hold. Let us describe the different cases that we have considered.

\textbf{Cases 1, 2 and 3:} In this cases $(X,Y)$ follows a bivariate normal distribution with mean vector $(2, 4)$, $(3,1)$ and $(2, 2.01)$ in Case 1, 2 and 3 (C1, C2 and C3), respectively, with covariance matrix 
\[ 
\mathbf V= 
\begin{pmatrix}
  2 & 1.5 \\
  1.5 & 1.5  \\
\end{pmatrix}
\] 

\textbf{Cases 4, 5, 6, 7 and 8:} In this cases the dependence structure is given by a Clayton copula with parameter 0.5, that is, $C(u,v)= (u^{-0.5} + v^{-0.5})^{-2}$, for all $(u,v)\in [0,1]^2$, and the marginal distributions are either Pareto or Weibull, denoted by $X \sim P(a, k)$ and $X \sim W(a, b)$,  that is, the distribution is given by either $F(x)= 1 -\left( \frac{k}{x} \right)^a$,  for all $x\ge k$, or $F(x)= 1 -\exp\left( -(x/b)^a\right)$,  for all $x\ge 0$, respectively. In particular, $X \sim P(2, 1)$ and $Y \sim P(1.5, 1)$ in Case 4 (C4), $X \sim P(5, 4)$ and $Y \sim P(1.5, 1)$ in Case 5 (C5), $X \sim W(6, 2)$ and $Y \sim W(1.5, 1.5)$ in Case 6 (C6), $X \sim W(0.75, 4)$ and $Y \sim W(0.25, 1.5)$ in Case 7 (C7) and $X \sim W(0.5, 2)$ and $Y \sim W(0.9, 1.5)$ in Case 8 (C8).

Let us now justify the choice of the previous cases.  We have taken some bivariate normal distributions because of the fact that testing $X\le_{\textup{st:wj}}Y$ is reduced to the comparison of the means (see Example \ref{belip}). In particular, the random variables are ordered in the sense $X\le_{\textup{st:wj}}Y$ in Case 1, whereas the random variables are ordered in the reverse sense in Case 2. As regarding Case 3, the order is verified in the sense $X\le_{\textup{st:wj}}Y$ but the difference between the survival functions, as well as the difference of the means, is very small, hence it is difficult to detect it. Regarding the non normal cases, for cases 4 and 5, the new criterion does and does not hold, respectively  (see Figure \ref{mc}).  In cases 6, 7 and 8 the order does not hold in any sense, and there is an increasing difficulty to reject the null hypothesis (see Figure \ref{mc}). In addition, for cases 6 and 7 we have that $E[X] > E[Y]$, so the Student's t tests should reject the null hypothesis, whereas in Case 8 we have that $E[X] < E[Y]$, so the Student's t test should not reject the null hypothesis. 

\begin{figure}
\centering
\includegraphics[width=\textwidth]{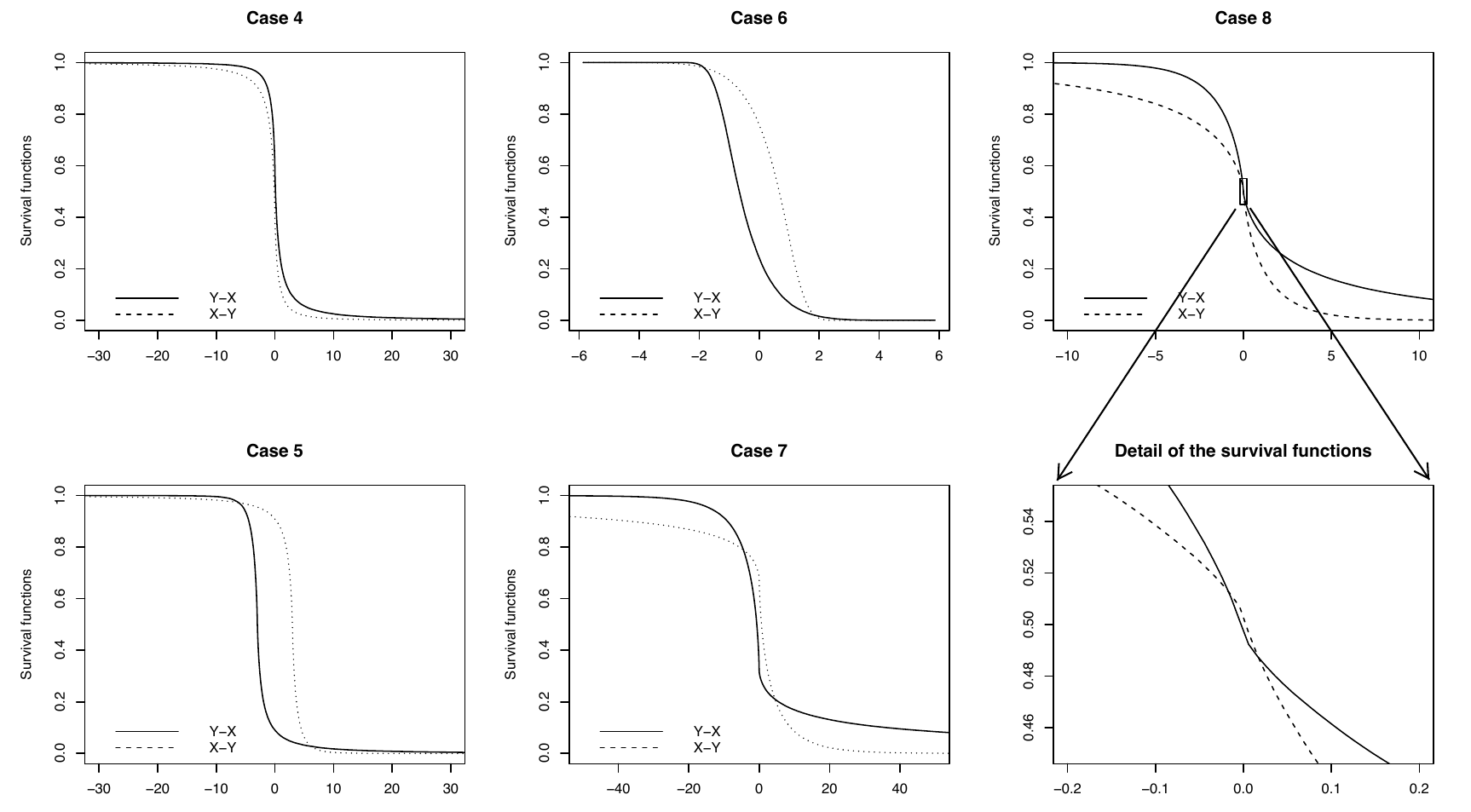}
\caption{\label{mc}\textit{Survival functions for $X-Y$ and $Y-X$ according to cases 4, 5, 6, 7 and 8.}}
\end{figure}

We have performed 1000 Monte Carlo replications for each case with different sample sizes ($n=50, 100, 200$ and $500$), in which the rejection rates of the null hypothesis have been computed for the two conventional significance levels of $0.05$ and $0.01$. The number of points of the grid is $k=100$ in every replication. The results are provided in Table I. Next, we make some observations on them. 

As we can see from Table I, when the order clearly holds (cases 1 and 4)  our test performs very well with a rejection rate of 0 per cent, even for small sample sizes in Case 1 whereas in Case 4 we need a sample of size 100 to get a 0.034 rate of rejection.  When the the null hypothesis clearly does not hold (cases 2, 5 and 6), the new test has a rejection rate of 100 per cent, even for small sample sizes. Therefore, the power of our test is very high in such situations.

Case 3 is a particular scenario where the null hypothesis holds but it is difficult to detect it. In this case, the rejection rate decreases as the sample size increases, being the rejection rates similar to those for the Student's t and WMW tests for large sample sizes. Therefore, the new test has a high power when the sample size increases. 

To finish, the order does not hold in cases 7 and 8 although it is difficult to reject the null hypothesis, hence we should expect low rejection rates. However, surprisingly, we get very high rejection rates, so the new test performs very well, in contrast with the WMW test. Regarding the Student's t test, we have a very low rejection rate for case 7 and an increasing rejection rate for case 8. Therefore, the new test detects more clearly that the random variable $Y$ should not be considered greater than the random variable $X$.

To sum up, the new test exhibits a very good behavior when the two random variables are either ordered or not ordered, even in the most difficult cases, where the behavior clearly improves as the sample size increases. 

\begin{table}
\scalebox{0.7}{\begin{tabular}{ccccccccccc}
 \multicolumn{10}{c}{Rejection rates for $\alpha=0.05$ ($\alpha=0.01$)}  \\ \hline
$n$ & Test & C1	& C2 & C3 & C4 & C5 & C6 & C7 & C8 \\ \hline 
& New st:wj	& 0 (0) & 1 (1) & 0.611 (0.520) & 0.160 (0.098) & 1.000 (1.000) & 1.000 (1.000) & 0.985 (0.982) & 0.997 (0.984) \\
$50$ & WMW	& 0 (0) & 1 (1) & 0.047 (0.008)  & 0.001 (0.000) & 1.000 (0.998) & 0.988 (0.947) &  0.258 (0.092) & 0.308 (0.103) \\ 
 & Student's t & 0  (0) & 1 (1) & 0.046 (0.008)  & 0.002 (0.000)  & 0.829 (0.763) & 0.975 (0.910) & 0.002 (0.001) & 0.768 (0.324) \\ \hline
& New st:wj	& 0 (0) & 1 (1) & 0.454 (0.336) & 0.034 (0.017) & 1.000 (1.000) & 1.000 (1.000) & 0.999  (0.997) & 0.999 (0.996) \\ 
$100$ & WMW	& 0 (0) & 1 (1) & 0.041 (0.012)  & 0.000 (0.000) & 1.000 (1.000) & 1.000 (1.000) & 0.391  (0.160) & 0.513 (0.241) \\ 
& Student's t & 0 (0) & 1 (1) & 0.040 (0.012) & 0.000 (0.000) & 0.863 (0.805) & 1.000 (0.996) & 0.000 (0.000) & 0.982 (0.805) \\ \hline 
& New st:wj	& 0 (0) & 1 (1) & 0.287 (0.164)  & 0.000 (0.000) & 1.000 (1.000) & 1.000 (1.000) & 1.000 (1.000) & 1.000 (0.998) \\
$200$ & WMW	& 0 (0) & 1 (1) & 0.032 (0.003)  & 0.000 (0.000) & 1.000 (1.000) & 1.000 (1.000) & 0.547  (0.329) & 0.769 (0.491) \\ 
& Student's t & 0 (0) & 1 (1) & 0.030 (0.004) & 0.000 (0.000) & 0.904 (0.861) & 1.000 (1.000) & 0.000 (0.000) & 0.999 (0.992) \\ \hline 
& New st:wj	& 0 (0) & 1 (1) & 0.083 (0.025) & 0.000 (0.000) & 1.000 (1.000)  & 1.000 (1.000) & 1.000 (1.000) & 1.000 (1.000) \\ 
$500$ & WMW	& 0 (0) & 1 (1) & 0.026 (0.003) & 0.000 (0.000) & 1.000 (1.000) & 1.000 (1.000) & 0.864 (0.668) & 0.979 (0.902)  \\ 
& Student's t & 0 (0) & 1 (1) & 0.021 (0.002) & 0.000 (0.000) & 0.931 (0.893) & 1.000 (1.000) & 0.000 (0.000) & 0.999 (1.000)  \\ \hline 
\end{tabular}}
\caption{Rejection rates for the null hypothesis. \label{tab1}}
\end{table}

\section{An application in finance}

Along this section we present an application of the new stochastic dominance criterion in the context of return data and portfolio selection.   The use of stochastic dominance criteria to select a portfolio is a very common practice in finance.   Given an investment portfolio, a natural question is how we can replace an asset of the given portfolio to provide a higher return.  It is well known that the usual stochastic dominance criterion does not provide an answer by two reasons. On the one side, it is difficult to find empirical evidence of stochastic dominance among two asset returns and, on the other side, replacing an asset by another one which stochastically dominates the first one does not result in a higher return for the new portfolio.  

Next, we explore the role of the new criterion in this context. Let us consider the comparison of some weekly return data according to the new criterion.  In order to eliminate the time dependent effect, we consider weekly returns.

\begin{Ex}\label{ex_app} We have considered 331 weekly return data for Twitter ($X$) and Facebook ($Y$) from January 1, 2018.  The data have been retrieved from \url{http://finance.yahoo.com} using \texttt{R} 3.6.3 (\texttt{R} Core Team, 2020) and the \texttt{quantmod} (v0.4.17; Ryan et al., 2020) package. Figure \ref{app1_FBT} shows that $X$ and $Y$ are highly positive correlated. Now, the question is the following. Can we state that one of them tends to take larger values than the other one in some probabilistic sense? Figure \ref{app1_FBT} shows that the usual stochastic dominance does not hold in any sense (the Q-Q plot clearly intersects the diagonal $x=y$) and, therefore, this criterion does not provide any answer.
 \begin{figure}
 \centering
\includegraphics[width=\textwidth]{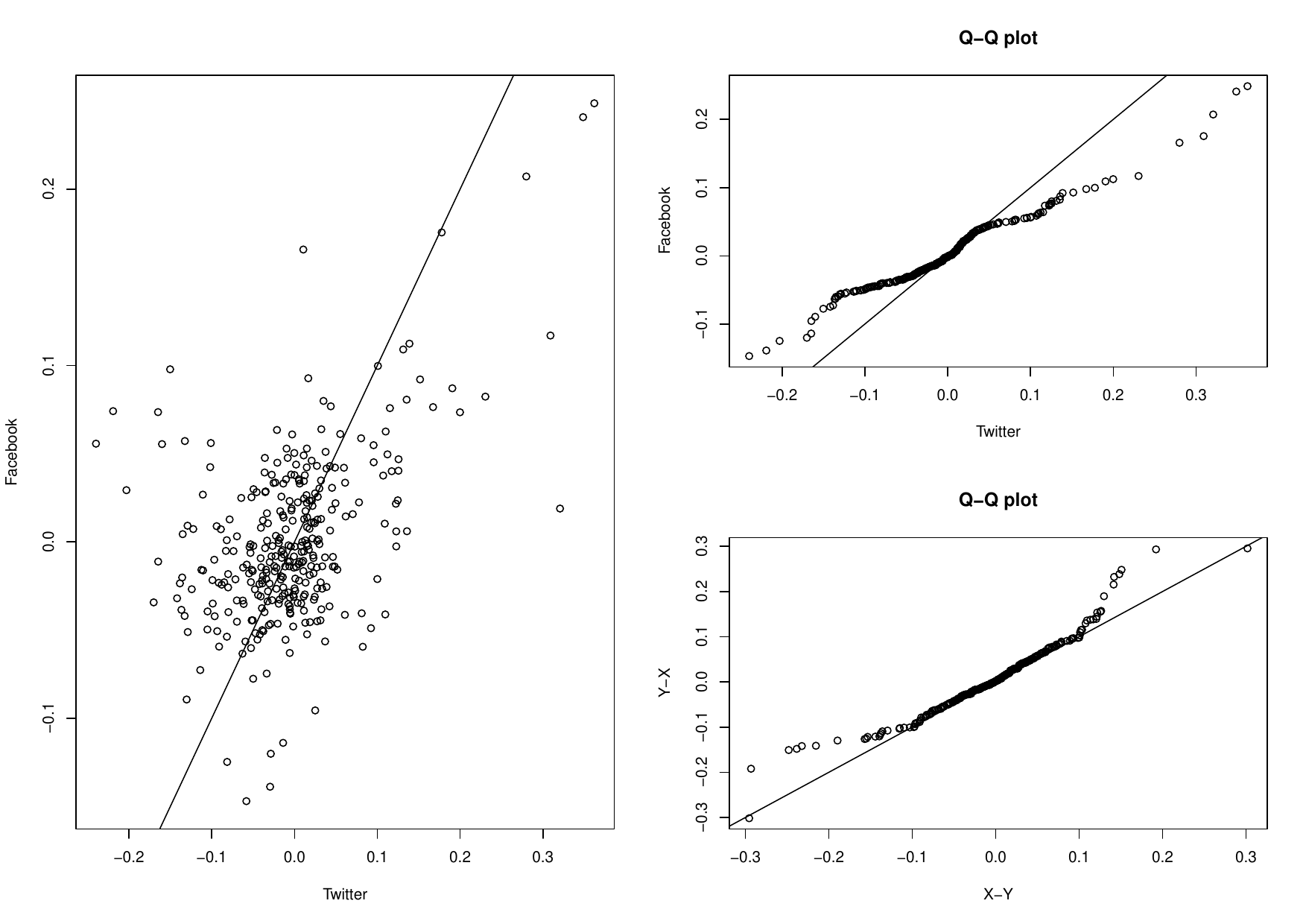}
\caption{\label{app1_FBT}\textit{Scatter plot for Twitter and Facebook weekly returns (on left side), Q-Q plot for Twitter and Facebook returns (on the right side, above) and Q-Q plot for the difference between Twitter (X) and Facebook (Y) returns (on the right side, below).}}
\end{figure}

This is a common example where the usual stochastic dominance does not hold.  However,  the Q-Q plot of $Y-X$ and $X-Y$ suggests that $X\le_{\textup{st:wj}}Y$ (see Figure \ref{app1_FBT}) and, therefore, Facebook weekly returns seem to take larger values than the Twitter ones. 

In order to confirm the former hypothesis, we apply the test provided in Section 3.  In particular, we test $H_0: X\le_{\textup{st:wj}}Y$ against the alternative hypothesis $H_1: X \nleq_{\textup{st:wj}}Y$. For this case we get a $p$-value=0.9167104 and, therefore,  there is no empirical evidence against $X\le_{\textup{st:wj}}Y$. However, this not the end of our analysis. We need to check if the previous order holds strictly. From Remark \ref{2.4} we have that if $X\le_{\textup{st:wj}}Y$ and $E[X]=E[Y]$, then $X=_{\textup{st:wj}}Y$. Therefore, we only need to check if the means are strictly ordered or not. In this case the Student's t test for the comparison of the means, in the case of paired data, gives a $p$-value=0.2142, and therefore there is not enough empirical evidence against the hypothesis $X=_{\textup{st:wj}}Y$.

Let us consider another case in which we consider weekly return data for Amazon (X) and Facebook (Y) over the same time period.

As we can see in Figure \ref{app2_FBT} $X$ and $Y$ are highly positive correlated, the usual stochastic dominance does not hold in any sense but the new stochastic dominance criterion seems to hold. Again we apply the test provided in Section 3 to test $H_0: X\le_{\textup{st:wj}}Y$ against the alternative hypothesis $
H_1: X \nleq_{\textup{st:wj}}Y$. In this case we get a $p$-value=0.9946772 and, therefore,  there is no empirical evidence against $X\le_{\textup{st:wj}}Y$.  Furthermore, in this case the Student's t test for the comparison of the means gives a $p$-value=0.008786, and therefore the means are different and $X\le_{\textup{st:wj}}Y$ strictly.

\begin{figure}
 \centering
\includegraphics[width=\textwidth]{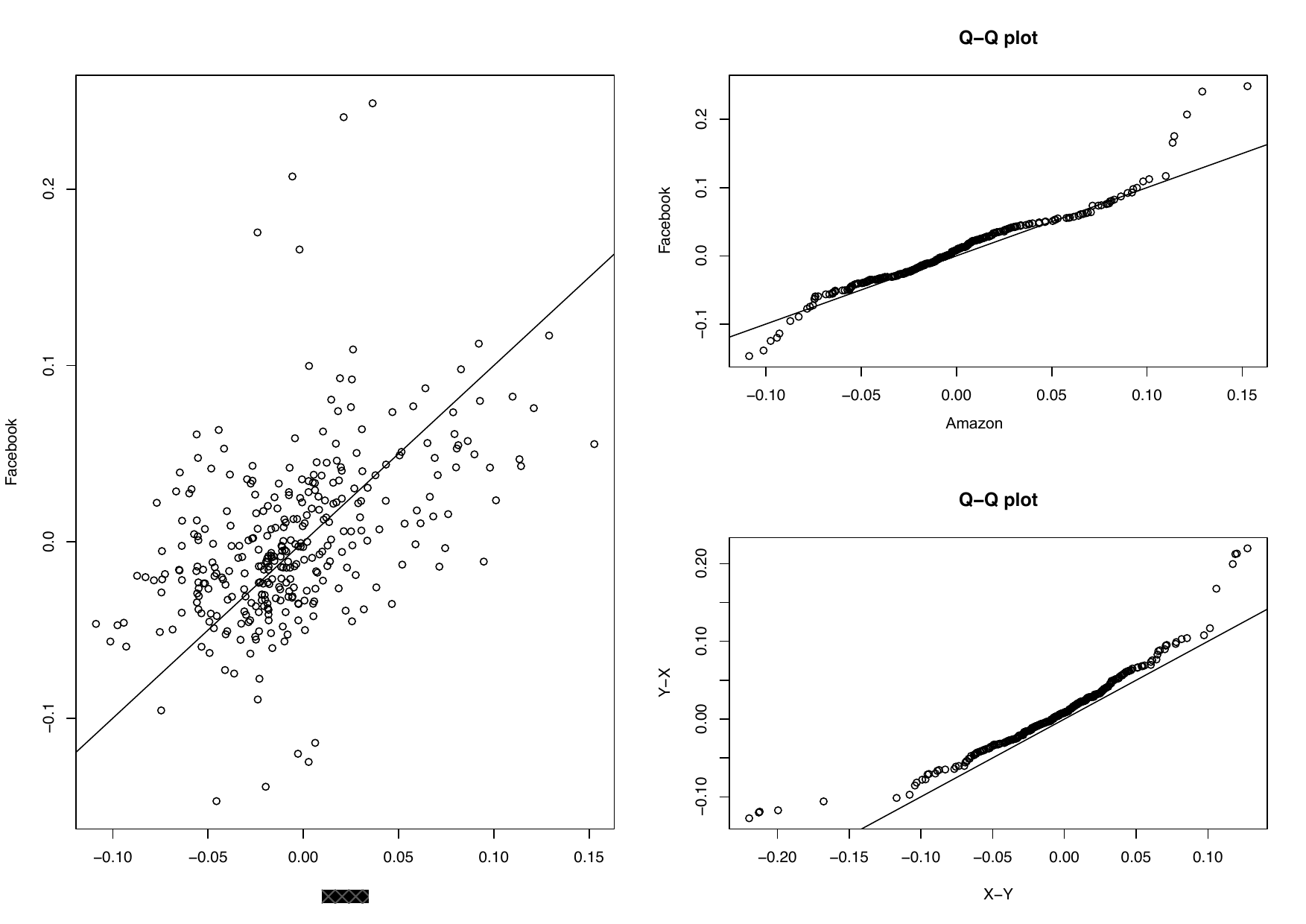}
\caption{\label{app2_FBT}\textit{Scatter plot for Amazon and Facebook weekly returns (on left side), Q-Q plot for Amazon and Facebook returns (on the right side, above) and Q-Q plot for the difference between Amazon (X) and Facebook (Y) returns (on the right side, below).}}
\end{figure} 

\end{Ex}

This fact can be used and exploited in the context of portfolio selection. Let us consider two portfolios where one of them is obtained from the other one by replacing one of the assets.  In particular, we have two portfolios with returns $P_1 = (1-\alpha) X + \alpha Z$ and $P_2  = (1-\alpha)Y + \alpha Z$. Let us assume that $X\le_{\textup{st:wj}}Y$, and let us compare $P_1$ and $P_2$ according to the new criterion. Given that $P_1 - P_2=(1-\alpha)(X - Y)$, $P_2 - P_1=(1-\alpha)(Y-X)$ and the  preservation of the usual stochastic dominance criterion under increasing transformations, we have that $P_1\le_{\textup{st:wj}} P_2$. Therefore, the portfolio $P_2$ has a greater return than the portfolio $P_1$ according to the new criterion, as we illustrate next in Example 4.2. This is not always true if the new criterion is replaced, in the previous discussion,  by the usual stochastic dominance. Let us see an example of this situation using the previous data set. 

\begin{Ex}
Continuing with Example \ref{ex_app}, let us consider the weekly returns for Twitter ($Z$), Amazon ($X$) and Facebook ($Y$), and let us consider the portfolios $P_{1} = 0.8 X + 0.2 Z$ and $P_2  = 0.8 Y + 0.2 Z$.  According to the previous discussion and the conclusions in the previous example, we have that $P_1 \le_{\textup{st:wj}}P_2 $. It is interesting to notice that the usual stochastic dominance criterion does not hold among $P_1$ and $P_2$ (see Figure \ref{app3_FBT} on the left)  but the new stochastic dominance criterion does (see Figure \ref{app3_FBT} on the right).

\begin{figure}
\centering
\includegraphics[width=\textwidth]{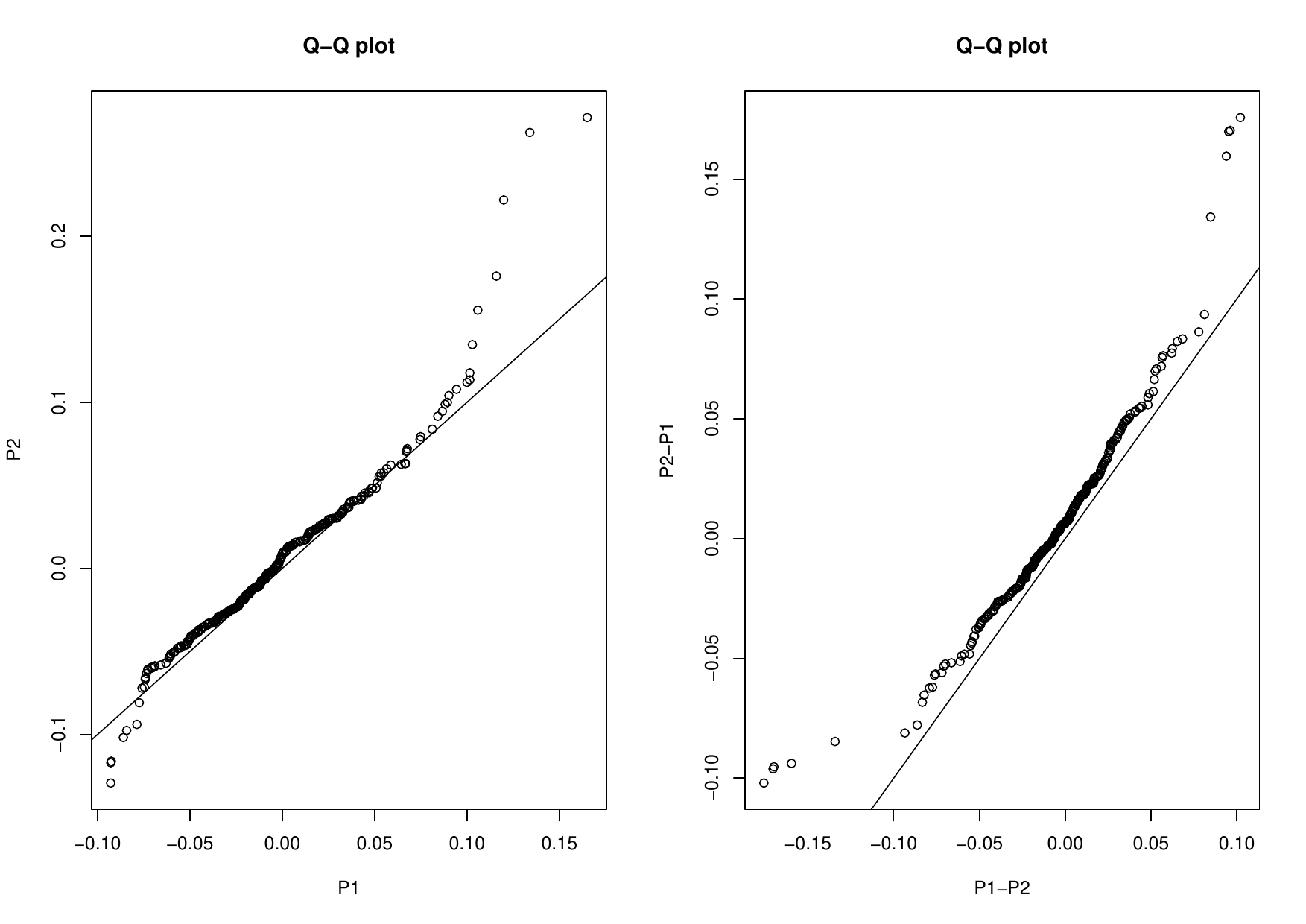}
\caption{\label{app3_FBT}\textit{Q-Q plot for the portfolios $P_1$ and $P_2$ (on the left side) and Q-Q plot for the differences of the two portfolios (on the right side).}}
\end{figure} 
\end{Ex}

Let us give another application. Let us consider a pair $(X,Y)$ of non independent random returns and the portfolios $P_1= (1-\alpha_1) X + \alpha_1 Y$ and $P_2 = (1-\alpha_2) X + \alpha_2 Y$.  Given that  $P_2 - P_1 = (\alpha_2 - \alpha_1) (Y-X)$, $P_1 - P_2 = (\alpha_2 - \alpha_1) (X-Y)$ and the preservation of the usual stochastic dominance under increasing transformations, we have that $P_1 \le_{\textup{st:wj}}P_2$, if $X\le_{\textup{st:wj}}Y$ and $\alpha_1 < \alpha_2$. Therefore, putting more weight to the greatest asset gives a greater return, according to the new criterion.

\section{Conclusions and future research}

In this paper we have introduced a new stochastic dominance criterion which takes into account the dependence structure of the random variables involved in the comparison, as well as it has been used to compare paired data. The new criterion has been shown to be coherent with the Student's t and WMW tests for paired samples, but it provides a more informative comparison of dependent random variables. The new criterion has been studied from the probabilistic and inferential point of view, providing a new test to check it. In addition, we have given an application in finance which shows how this new criterion can be fruitfully used in portfolio selection. It is also clear that this new test can be used in several other contexts, like clinical trials, where ``before" and ``after" data is a common situation. This study can be enlarged and enriched according to the following items, among others 

\textbf{1. Further properties of the st:wj criterion:} As for the preservation under increasing transformations is concerned, the weak joint stochastic dominance is not preserved by increasing transformations of the marginal distributions. That is, if $X \le_{\textup{st:wj}} Y$, then $\phi(X) \nleq_{\textup{st:wj}} \phi(Y)$, for every increasing function $\phi$. The argument is the following. Assume that $\phi(X) \leq_{\textup{st:wj}} \phi(Y)$, for any increasing function $\phi$. Then, by Remark \ref{2.4}, $E[\phi(X)]\leq E[\phi(Y)]$, for any increasing function $\phi$, provided that the previous expectations exist, and, consequently, $X \leq_{\textup{st}}Y$. However, we have seen in Section 2  that the weak joint stochastic dominance does not imply the usual stochastic dominance. Therefore, it would interesting to find some other families of functions that preserve the new criterion under transformations of the marginals distributions.  

Additionally, if $X \le_{\textup{st:wj}} Y$ and $Y \le_{\textup{st:wj}} Z$ then it is not necessarily true that $X \le_{\textup{st:wj}} Z$. The main reason is that the comparisons $X \le_{\textup{st:wj}} Y$ and $Y \le_{\textup{st:wj}} Z$ does not contain any information about the dependence structure of $(X,Z)$. Therefore, finding conditions to ensure $X \le_{\textup{st:wj}} Z$ is worth considering. 

\textbf{2. Multivariate extensions:} A natural consideration is how to define a multivariate version of the new stochastic dominance criterion. Let us consider a $2n$ dimensional random vector $(\mathbf X, \mathbf Y)$, where $\mathbf X$ and $\mathbf Y$ are $n$ dimensional random vectors. A natural extension in the multivariate case is the following: 

We say that $\mathbf X$ is smaller than $\mathbf Y$ in the multivariate joint weak stochastic dominance, denoted by $\mathbf X \le_{\textup{st:wj}} \mathbf Y$, if $\mathbf X - \mathbf Y \le_{\textup{st}} \mathbf Y - \mathbf X$, where $\le_{\textup{st}}$ denotes the multivariate stochastic dominance (see Shaked and Shanthikumar, 2007 and Belzunce et al., 2016a). 

We are working on properties and inferential issues for this definition.

\section*{Acknowledgements} The authors want to acknowledge the comments by two anonymous referees which have improved significantly the presentation of this paper.  We sincerely thank Prof. Franco Pellerey, who send us some comments and examples on an earlier version of this paper. We also thank Prof. Tomasso Lando who pointed us to the paper by Montes et al. (2020).

Funding: This work was supported by the Ministerio de Ciencia e Innovaci\'on of Spain under grant PID2019-103971GB-I00/AEI/10.13039/501100011033.


\begin{thebibliography}{99}

\bibitem{} Andreoli, F. (2018). Robust inference for inverse stochastic dominance. \textsl{Journal of Business \& Economic Statistics}, \textbf{36},  146--159.

\bibitem{AKS02} Arcones, M.A., Kvam, P. and Smaniego, F.J. (2002).  Nonparametric estimation of a distribution subject to a stochastic precedence constraint. \textsl{Journal of the American Statistical Association}, \textbf{97}, 170--182.

\bibitem{} Barret, G.F. and Donald, S.G. (2003). Consistent tests for stochastic dominance. \textsl{Econometrica}, \textbf{71}, 71--104. 

\bibitem{} Barrett,  G.F., Donald, S.G.  and Bhattacharya, D. (2014). Consistent nonparametric tests for Lorenz dominance.  \textsl{Journal of Business \& Economic Statistics},  \textbf{32},  1--13.

\bibitem{} Bell, D. (1982). Regret in decision making under uncertainty. \textsl{Operations Research}, \textbf{30}, 961--981.

\bibitem{} Belzunce, F., Mart\'{i}nez-Riquelme, C. and Mulero, J.M. (2016a). \textsl{An Introduction to Stochastic Orders}. Elsevier-Academic Press, Amsterdan. 

\bibitem{} Belzunce, F., Mart\'{i}nez-Riquelme, C., Pellerey, F. and Zalzadeh, S. (2016b). Comparison of hazard rates for dependent random variables. \textsl{Statistics}, \textbf{50}, 630--648. 

\bibitem{BSC04} Boland, P.J., Singh, H. and Cukic, B. (2004). The stochastic precedence ordering with applications in sampling and testing. \textsl{Journal of Applied Probability}, \textbf{41}, 73--82.

\bibitem{} Cai, J., and Wei, W. (2014). Some new notions of dependence with applications in optimal allocation problems.  \textsl{Insurance:  Mathematics and  Economics},  \textbf{55}, 200--209.

\bibitem{} Davidson, R. and Duclos,  J.Y.  (2000).  Statistical inference for stochastic dominance and for the measurement of poverty and inequality. \textsl{Econometrica},  \textbf{68}, 1435--1464.

\bibitem{} Denuit, M., Dhaene, J., Goovaerts, M. and Kaas, R.  (2005). Actuarial theory for dependent risks. Wiley, Chichester.

\bibitem{} Divine, G.W., Norton, H.J., Baron, A.E. and Juarez-Colunga, E. (2018). The Wilcoxon-Mann-Whitney procedure fails as a test for medians. \textsl{The American Statistician}, \textbf{72}, 278--286.  

\bibitem{} Fishburn, P. (1982). Nontransitive measurable utility. \textsl{Journal of Mathematical Psychology}, \textbf{26}, 31--67.

\bibitem{} Genz. A., Bretz, F., Miwa, T., Mi, X.,  Leisch, F., Scheipl, F., Bornkamp, B., Maechler, M., Hothorn, T.,  (2020). \texttt{mvtnorm}: Multivariate Normal and t Distributions. \texttt{R} package   version 1.1-0.

\bibitem{} Hansen, B.E. (1996). When a nuisance parameter is not identified under the null hypothesis. \textsl{Econometrica}, \textbf{64}, 413--430. 

\bibitem{} Hon Tan, C. and Hartman, J. (2013). Stochastic dominance, regret dominance and regret theoretic dominance. \textsl{EURO Journal on Decision Processes}, \textbf{1}, 285--297.

\bibitem{} Lehmann, E.L. (1955). Ordered families of distributions. \textsl{The Annals of Mathematical Statistics}, \textbf{26}, 399--419.

\bibitem{} Levy, H. (2016). \textsl{Stochastic Dominance}. Springer, London. 

\bibitem{} Linton, O.,  Maasoumi, E.  and Whang, Y.J. (2005).  Consistent testing for
stochastic dominance under general sampling schemes.  \textsl{The Review of
Economic Studies},  \textbf{72,} 735--765. 

\bibitem{} Loomes, G. and Sugden, R. (1982). Regret theory: An alternative theory of rational choice under uncertainty. \textsl{The Economic Journal}, \textbf{92}, 805--824.

\bibitem{} Montes, I.,  Salamanca, J.J. and Montes, S. (2020). A modified version of stochastic dominance involving dependence. \textsl{Statistics and Probability Letters}, \textbf{165}, 1--12.

\bibitem{} Mulero, J., Sordo, M.A., de Souza, M.C. and Suárez-LLorens, A. (2017). Two stochastic dominance criteria based on tail comparisons. \textsl{Applied Stochastic Models in Business and Industry}, \textbf{33}, 575--589.

\bibitem{} M\"{u}ller, A. and Stoyan, D. (2002). \textsl{Comparison methods for stochastic models and risks}. Wiley Series in Probability and Statistics. John Wiley \& Sons, Ltd., Chichester.

\bibitem{} Navarro, J.  and  Sarabia, J. (2022). Copula representations for the sum of dependent risks: Models and comparisons.  \textsl{Probability in the Engineering and Informational Sciences},  \textbf{36},  320--340.

\bibitem{NE06} Nelsen, R. B. (2006). \textit{An Introduction to Copulas}, Springer-Verlag, New York.

\bibitem{} \texttt{R} Core Team (2020). \texttt{R}: A language and environment for statistical
  computing. \texttt{R} Foundation for Statistical Computing, Vienna, Austria.  URL http://www.R-project.org/.

\bibitem{} Ripley, B., Venables, B., Bates, D.M., Hornik, K., Gebhardt, A. and Firth, D. (2019). \texttt{MASS}: Support Functions and Datasets for Venables and Ripley's MASS. \texttt{R} package   version 7.3-51.5.

\bibitem{}   Ryan, J.A., Ulrich, J.M., Thielen, W., Teetor, P. and Bronder, S. (2020). \texttt{quantmod}: Quantitative Financial Modelling Framework. \texttt{R} package   version 0.4.17.

\bibitem{} Scaillet, O.  and Topaloglou, N.  (2010).  Testing for stochastic
dominance efficiency. \textsl{Journal of Business \& Economic Statistics},  \textbf{28}, 169--180.

\bibitem{Sha} Shaked, M. and Shanthikumar, J.G. (2007). \textsl{Stochastic
Orders}, Springer-Verlag, New York.


\bibitem{SHA2}  Shanthikumar, J.G. and Yao, D.D. (1991). Bivariate characterization of some stochastic order relations. \textit{Advances in Applied Probability}, \textbf{23}, 642--659.

\bibitem{} Sriboonchitta, S., Wong, W.K., Dhompongsa, S. and Nguyen, H.T. (2010). \textsl{Stochastic Dominance and Applications to Finance, Risk and Economics}, CRC Press, Boca Raton, FL.

\bibitem{} van der Vaart, A. and Wellner, J.A. (1996). \textsl{Weak Convergence and Empirical Processes}. Springer, New York.

\end{thebibliography}
\end{document}